\theoremstyle{plain}
\newtheorem{theorem}{Theorem}[section]
\newtheorem{lemma}[theorem]{Lemma}
\theoremstyle{definition}
\theoremstyle{remark}
\newtheorem{remark}[theorem]{Remark}
\numberwithin{equation}{section}
\begin{document}

\title[Nonlinear stability of the two-jet Kolmogorov type flow]{Nonlinear stability of the two-jet Kolmogorov type flow on the unit sphere under a perturbation with nondissipative part}

\author[T.-H. Miura]{Tatsu-Hiko Miura}
\address{Graduate School of Science and Technology, Hirosaki University, 3, Bunkyo-cho, Hirosaki-shi, Aomori, 036-8561, Japan}
\email{thmiura623@hirosaki-u.ac.jp}

\subjclass[2010]{35Q30, 76D05, 35B35, 35R01}

\keywords{vorticity equation, Kolmogorov type flow, nonlinear stability, Killing vector field}

\begin{abstract}
  We consider the vorticity form of the Navier--Stokes equations on the two-dimensional unit sphere and study the nonlinear stability of the two-jet Kolmogorov type flow which is a stationary solution given by the zonal spherical harmonic function of degree two.
  In particular, we assume that a perturbation contains a nondissipative part given by a linear combination of the spherical harmonics of degree one and investigate the effect of the nondissipative part on the long-time behavior of the perturbation through the convection term.
  We show that the nondissipative part of a weak solution to the nonlinear stability problem is preserved in time for all initial data.
  Moreover, we prove that the dissipative part of the weak solution converges exponentially in time towards an equilibrium which is expressed explicitly in terms of the nondissipative part of the initial data and does not vanish in general.
  In particular, it turns out that the asymptotic behavior of the weak solution is finally determined by a system of linear ordinary differential equations.
  To prove these results, we make use of properties of Killing vector fields on a manifold.
  We also consider the case of a rotating sphere.
\end{abstract}

\maketitle

\section{Introduction} \label{S:Intro}
Let $S^2$ be the two-dimensional (2D) unit sphere in $\mathbb{R}^3$.
We consider the Navier--Stokes equations
\begin{align} \label{E:NS_Intro}
  \partial_t\mathbf{u}+\nabla_{\mathbf{u}}\mathbf{u}-\nu(\Delta_H\mathbf{u}+2\mathbf{u})+\nabla p = \mathbf{f}, \quad \mathrm{div}\,\mathbf{u} = 0 \quad\text{on}\quad S^2\times(0,\infty).
\end{align}
Here $\mathbf{u}$ is the tangential velocity field of a fluid, $p$ is the pressure, and $\mathbf{f}$ is a given external force.
Also, $\nu>0$ is the viscosity coefficient, $\nabla_{\mathbf{u}}\mathbf{u}$ is the covariant derivative of $\mathbf{u}$ along itself, $\Delta_H$ is the Hodge Laplacian via identification of vector fields and one-forms, and $\nabla$ and $\mathrm{div}$ are the gradient and the divergence on $S^2$.
Note that the viscous term in \eqref{E:NS_Intro} contains the zeroth order term since it is twice of the deformation tensor $\mathrm{Def}\,\mathbf{u}$:
\begin{align} \label{E:div_Def}
  2\,\mathrm{div}\,\mathrm{Def}\,\mathbf{u} = \Delta_H\mathbf{u}+\nabla(\mathrm{div}\,\mathbf{u})+2\,\mathrm{Ric}(\mathbf{u}) = \Delta_H\mathbf{u}+\nabla(\mathrm{div}\,\mathbf{u})+2\mathbf{u}.
\end{align}
Here $\mathrm{Ric}\equiv1$ is the Ricci curvature of $S^2$.
There are many works on the Navier--Stokes equations on spheres and manifolds with this kind of viscous term (see e.g. \cite{Tay92,Prie94,Nag99,MitTay01,DinMit04,KheMis12,ChaCzu13,ChaYon13,SaTaYa13,SaTaYa15,Pie17,KohWen18,PrSiWi20,SamTuo20}).
Also, several authors studied the Navier--Stokes equations on manifolds with viscous term replaced by $\nu\Delta_H\mathbf{u}$ in analogy of the flat domain case (see e.g. \cite{IliFil88,Ili90,TemWan93,CaRaTi99,Ili04,Wir15,Lic16,Ski17}).
We refer to \cite{EbiMar70,Aris89,DuMiMi06,Tay11_3,ChCzDi17} for the identity \eqref{E:div_Def} and the choice of the viscous term in the Navier--Stokes equations on manifolds.
A crucial difference due to the choice of the viscous term is the presence of nondissipative vector fields.
The viscosity always works if one chooses $\nu\Delta_H\mathbf{u}$ since $S^2$ does not admit nontrivial harmonic forms.
On the other hand, if one takes $\nu(\Delta_H\mathbf{u}+2\mathbf{u})$, then the viscosity does not work for tangential vector fields of the form $X(x)=\mathbf{a}\times x$, $x\in S^2$ with any $\mathbf{a}\in\mathbb{R}^3$.
Note that this $X$ is a Killing vector field on $S^2$, and in general Killing vector fields are nondissipative stationary solutions to the Navier--Stokes equations on a manifold with viscous term given by \eqref{E:div_Def}.
In fact, the problem of the existence and uniqueness of solutions is not so affected by the choice of the viscous term, since the difference of the above two viscous terms is linear and of lower order.
However, as we will see below, the long-time behavior of a solution can be different by the choice of the viscous term due to the effect of a nondissipative part of a solution coming from the convection term.

Since $S^2$ is 2D and simply connected, the Navier--Stokes equations \eqref{E:NS_Intro} are equivalent to the following equation for the vorticity $\omega=\mathrm{rot}\,\mathbf{u}$:
\begin{align} \label{E:Vort}
  \partial_t\omega+\mathbf{u}\cdot\nabla\omega-\nu(\Delta\omega+2\omega) = f, \quad \mathbf{u} = \mathbf{n}_{S^2}\times\nabla\Delta^{-1}\omega \quad\text{on}\quad S^2\times(0,\infty).
\end{align}
Here $f=\mathrm{rot}\,\mathbf{f}$ is an external force, $\mathbf{n}_{S^2}$ is the unit outward normal vector field of $S^2$, and $\Delta$ is Laplace--Beltrami operator on $S^2$ with inverse $\Delta^{-1}$ in $L_0^2(S^2)$, the space of $L^2$ functions on $S^2$ with vanishing mean.
Also, $\mathbf{a}\cdot\mathbf{b}$ and $\mathbf{a}\times\mathbf{b}$ stand for the inner and vector products of $\mathbf{a},\mathbf{b}\in\mathbb{R}^3$.
We refer to \cite{Miu21pre} for the derivation of \eqref{E:Vort} from \eqref{E:NS_Intro}.

For $n\in\mathbb{Z}_{\geq0}$ and $|m|\leq n$, let $Y_n^m$ be the spherical harmonics and $\lambda_n=n(n+1)$ be the corresponding eigenvalue of $-\Delta$ (see Section \ref{S:Pre} for details).
Then, for $n\in\mathbb{N}$ and $a\in\mathbb{R}$, the vorticity equation \eqref{E:Vort} with external force $f=a\nu(\lambda_n-2)Y_n^0$ has a stationary solution with corresponding velocity field
\begin{align} \label{E:Zna_Intro}
  \omega_n^a(\theta,\varphi) = aY_n^0(\theta), \quad \mathbf{u}_n^a(\theta,\varphi) = -\frac{a}{\lambda_n\sin\theta}\frac{dY_n^0}{d\theta}(\theta)\partial_\varphi x(\theta,\varphi),
\end{align}
where $x(\theta,\varphi)$ is the parametrization of $S^2$ by the colatitude $\theta$ and the longitude $\varphi$.
The flow \eqref{E:Zna_Intro} can be seen as a spherical version of the well-known plane Kolmogorov flow, which is a stationary solution to the Navier--Stokes equations in a 2D flat torus (see e.g. \cite{MesSin61,Iud65,Mar86,OkaSho93,MatMiy02} for the study of the stability of the plane Kolmogorov flow).
In \cite{Ili04}, the flow \eqref{E:Zna_Intro} is called the generalized Kolmogorov flow.
Also, it is called an $n$-jet zonal flow in \cite{SaTaYa13,SaTaYa15}.
We call \eqref{E:Zna_Intro} the $n$-jet Kolmogorov type flow in order to emphasize both the similarity to the plane Kolmogorov flow and the number of jets.

In this paper we focus on the case $n=2$ and consider the stability of the two-jet Kolmogorov type flow for the vorticity equation \eqref{E:Vort} (see Remark \ref{R:Onejet} for the one-jet case).
Our particular interest is in studying the effect of a nondissipative part of a perturbation in the stability problem.
Since $-\Delta Y_1^m=2Y_1^m$ for $|m|=0,1$, the viscosity does not work for functions in $\mathrm{span}\{Y_1^0,Y_1^{\pm1}\}$, for which the corresponding velocity fields are Killing vector fields on $S^2$.
If a perturbation contains such a nondissipative part, then it seems to be natural to ask how the nondissipative part affects the long-time behavior of the perturbation through the convection term.
Our aim is to give an explicit answer to this problem.

The stability of the Kolmogorov type flows was studied by Ilyin \cite{Ili04} and Sasaki, Takehiro, and Yamada \cite{SaTaYa13,SaTaYa15} for the Navier--Stokes equations on $S^2$ and by Taylor \cite{Tay16} for the Euler equations on $S^2$ (and these authors dealt with the case of a rotating sphere).
For the viscous case, Ilyin \cite{Ili04} investigated the linear stability and showed that the $n$-jet Kolmogorov type flow is globally asymptotically stable for all $\nu>0$ when $n=1,2$ but it becomes unstable for small $\nu>0$ when $n\geq3$.
In that paper, however, the viscous term in the Navier--Stokes equations was taken as $\nu\Delta_H\mathbf{u}$, which becomes $\nu\Delta\omega$ in the vorticity form.
Hence a perturbation does not contain a nondissipative part in the setting of \cite{Ili04}.
For the case of the viscous term $\nu(\Delta\omega+2\omega)$, Sasaki, Takehiro, and Yamada studied the linear and nonlinear stability of the Kolmogorov type flows in \cite{SaTaYa13} and \cite{SaTaYa15}, respectively, and obtained the same results as in \cite{Ili04}.
However, they considered a perturbation in the orthogonal complement of $\mathrm{span}\{Y_1^0,Y_1^{\pm1}\}$, so the effect of a nondissipative part was not taken into account.

In our previous work \cite{Miu21pre}, we studied the linear stability of the two-jet Kolmogorov type flow under a perturbation which contains a nondissipative part.
The two-jet Kolmogorov type flow is of the form
\begin{align} \label{E:TKol}
  \omega_n^a(\theta,\varphi) = aY_n^0(\theta) = \frac{a}{4}\sqrt{\frac{5}{\pi}}(3\cos^2\theta-1).
\end{align}
Then the linearized equation for the vorticity equation \eqref{E:Vort} around \eqref{E:TKol} is
\begin{align} \label{E:TKol_Linear}
  \partial_t\omega = \nu(\Delta\omega+2\omega)-a_2\cos\theta\,\partial_\varphi(I+6\Delta^{-1})\omega, \quad a_2 = \frac{a}{4}\sqrt{\frac{5}{\pi}} \quad\text{on}\quad S^2\times(0,\infty),
\end{align}
where $I$ is the identity operator.
We refer to \cite{Miu21pre} for the derivation of \eqref{E:TKol_Linear} from \eqref{E:Vort}.
In \cite{Miu21pre} we proved that a solution $\omega(t)$ to \eqref{E:TKol_Linear} with initial data $\omega_0\in L_0^2(S^2)$ satisfies
\begin{align*}
  \|\omega(t)-\omega_{\infty,\mathrm{lin}}\|_{L^2(S^2)} \leq C\left(1+\frac{|a_2|}{\nu}\right)e^{-4\nu t}\|\omega_0\|_{L^2(S^2)}, \quad t\geq0,
\end{align*}
where the equilibrium $\omega_{\infty,\mathrm{lin}}$ is given by
\begin{align} \label{E:Lim_Lin}
  \omega_{\infty,\mathrm{lin}} = \omega_{0,1}^0Y_1^0+\sum_{m=\pm1}\omega_{0,1}^m\left(Y_1^m+\frac{a_2}{\nu}\frac{im}{2\sqrt{5}}Y_2^m\right), \quad \omega_{0,1}^m = (\omega_0,Y_1^m)_{L^2(S^2)}
\end{align}
and $C>0$ is a constant independent of $t$, $\nu$, $a_2$, and $\omega_0$.
Note that $\omega_{\infty,\mathrm{lin}}$ contains the nonzero $Y_2^{\pm1}$-components when $\omega_{0,1}^{\pm1}\neq0$, although the functions $Y_2^{\pm1}$ themselves are dissipative in \eqref{E:TKol_Linear}.
This result shows that the nondissipative part indeed affects the long-time behavior of the perturbation through the interaction between the viscosity and convection even in the linear stability case.
In \cite{Miu21pre} and the companion paper \cite{MaeMiupre} we also observed that a part of a solution to \eqref{E:TKol_Linear} decays at a rate faster than the usual viscous rate $O(e^{-\nu t})$ when $\nu>0$ is small.
Such a phenomenon is called the enhanced dissipation, and it has been attracting interest of many researchers in recent years (see e.g. \cite{CoKiRyZl08,Zla10,Wei21} and \cite{BecWay13,LinXu19,IbMaMa19,WeiZha19,WeZhZh20} for the study of the enhanced dissipation for advection-diffusion equations and for the plane Kolmogorov flow, respectively).

In this paper we study the nonlinear stability of the two-jet Kolmogorov type flow under a perturbation with nondissipative part.
The nonlinear stability problem for the vorticity equation \eqref{E:Vort} around the two-jet Kolmogorov type flow \eqref{E:TKol} is
\begin{align} \label{E:NL_Pert}
  \left\{
  \begin{aligned}
    \partial_t\omega &= \nu(\Delta\omega+2\omega)-\frac{a}{4}\sqrt{\frac{5}{\pi}}\cos\theta\,\partial_\varphi(I+6\Delta^{-1})\omega-\mathbf{u}\cdot\nabla\omega \quad\text{on}\quad S^2\times(0,\infty), \\
    \mathbf{u} &= \mathbf{n}_{S^2}\times\nabla\Delta^{-1}\omega \quad\text{on}\quad S^2\times(0,\infty), \\
    \omega|_{t=0} &= \omega_0 \quad\text{on}\quad S^2,
  \end{aligned}
  \right.
\end{align}
where $\omega_0\in L_0^2(S^2)$ is an initial perturbation (see \cite{Miu21pre} for the derivation of the perturbation operator).
To state our main result, we fix some notations.
For $u\in L_0^2(S^2)$, we write
\begin{align*}
  u_n^m = (u,Y_n^m)_{L^2(S^2)}, \quad u_{=n} = \sum_{m=-n}^nu_n^mY_n^m, \quad u_{\geq N} = \sum_{n\geq N}u_{=n}.
\end{align*}
Note that $u_n^{-m}=(-1)^m\overline{u_n^m}$ if $u$ is real-valued, since $Y_n^{-m}=(-1)^m\overline{Y_n^m}$ (see Section \ref{S:Pre}).
In what follows, we consider real-valued initial data and solutions to \eqref{E:NL_Pert}, although we take the inner product of \eqref{E:NL_Pert} with the complex spherical harmonics $Y_n^m$ in order to make some expressions simple.
For a real-valued initial data $\omega_0\in L_0^2(S^2)$, we set
\begin{align} \label{E:Def_Alb}
  \alpha = \frac{1}{2\sqrt{6\pi}}\,\omega_{0,1}^1\in\mathbb{C}, \quad b = \frac{1}{2\sqrt{3\pi}}\,\omega_{0,1}^0\in\mathbb{R}.
\end{align}
Then we define $\omega_\infty=\sum_{m=-2}^2\omega_{2,\infty}^mY_2^m\in L_0^2(S^2)$ by
\begin{align} \label{E:Def_OmLim}
  \begin{aligned}
    \omega_{2,\infty}^0 &= -12a\cdot\frac{|\alpha|^2(4\nu^2+|\alpha|^2+b^2)}{(4\nu^2+4|\alpha|^2+b^2)(16\nu^2+4|\alpha|^2+b^2)}, \\
    \omega_{2,\infty}^1 &= \frac{\sqrt{6}\,i\alpha(4\nu+2ib)}{4|\alpha|^2+(4\nu+ib)(4\nu+2ib)}(\omega_{2,\infty}^0+a), \quad \omega_{2,\infty}^{-1} = -\overline{\omega_{2,\infty}^1}, \\
    \omega_{2,\infty}^2 &= \frac{2i\alpha}{4\nu+2ib}\,\omega_{2,\infty}^1, \quad \omega_{2,\infty}^{-2} = \overline{\omega_{2,\infty}^2}.
  \end{aligned}
\end{align}
Also, we set $H_0^1(S^2) = L_0^2(S^2)\cap H^1(S^2)$ and write $H_0^{-1}(S^2)$ for the dual space of $H_0^1(S^2)$ with duality product $\langle\cdot,\cdot\rangle_{H_0^1}$.
The main result of this paper is as follows.

\begin{theorem} \label{T:NLS_Main}
  For all real-valued initial data $\omega_0\in L_0^2(S^2)$, there exists a real-valued unique global weak solution
  \begin{align} \label{E:WS_Class}
    \begin{aligned}
      &\omega\in C([0,\infty);L_0^2(S^2))\cap L_{loc}^2([0,\infty);H_0^1(S^2)) \\
      &\text{with} \quad \partial_t\omega\in L_{loc}^2([0,\infty);H_0^{-1}(S^2))
    \end{aligned}
  \end{align}
  to \eqref{E:NL_Pert} in the sense that $\omega(0)=\omega_0$ in $L_0^2(S^2)$ and
  \begin{align} \label{E:Weak_Form}
    \begin{aligned}
      \langle\partial_t\omega(t),\psi\rangle_{H_0^1} &= -\nu(\nabla\omega(t),\nabla\psi)_{L^2(S^2)}+2\nu(\omega(t),\psi)_{L^2(S^2)} \\
      &\qquad -\frac{a}{4}\sqrt{\frac{5}{\pi}}(\cos\theta\,\partial_\varphi(I+6\Delta^{-1})\omega(t),\psi)_{L^2(S^2)} \\
      &\qquad -(\mathbf{u}(t)\cdot\nabla\omega(t),\psi)_{L^2(S^2)}
    \end{aligned}
  \end{align}
  for all real-valued $\psi\in H_0^1(S^2)$ and a.e. $t>0$.
  Moreover,
  \begin{align} \label{E:NP_1and3}
    \omega_{=1}(t) = \omega_{0,=1}, \quad \|\omega_{\geq 3}(t)\|_{L^2(S^2)} \leq e^{-10\nu t}\|\omega_{0,\geq3}\|_{L^2(S^2)}
  \end{align}
  for all $t\geq 0$ and
  \begin{align} \label{E:NP_2}
    \|\omega_{=2}(t)-\omega_\infty\|_{L^2(S^2)} \leq \sigma_1e^{-2\nu t}\left(\|\omega_{0,=2}-\omega_\infty\|_{L^2(S^2)}+\frac{\sigma_2}{\nu}\right)
  \end{align}
  for all $t\geq 0$.
  Here $\omega_\infty=\sum_{m=-2}^2\omega_{2,\infty}^mY_2^m$ is given by \eqref{E:Def_Alb}--\eqref{E:Def_OmLim}.
  Also,
  \begin{align} \label{E:Sigma_12}
    \begin{aligned}
      \sigma_1 &= \exp\left(\frac{C_1}{\nu}\|\omega_{0,\geq3}\|_{L^2(S^2)}\right), \\
      \sigma_2 &= C_2\Bigl(|a|+\|\omega_{0,\geq3}\|_{L^2(S^2)}+\|\omega_\infty\|_{L^2(S^2)}\Bigr)\|\omega_{0,\geq3}\|_{L^2(S^2)},
    \end{aligned}
  \end{align}
  and $C_1,C_2>0$ are constants independent of $t$, $\nu$, $a$, and $\omega_0$.
\end{theorem}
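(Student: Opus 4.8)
The plan is to settle well-posedness first and then to track the three spectral blocks $\omega_{=1}$, $\omega_{=2}$, $\omega_{\geq3}$ separately, using that $-\Delta$ and the Fourier multiplier $I+6\Delta^{-1}$ are diagonal in the spherical harmonics, with $I+6\Delta^{-1}$ acting as $1-6/\lambda_n$ and in particular \emph{annihilating} the degree-two block since $\lambda_2=6$. For existence and uniqueness I would run a Galerkin scheme on $\mathrm{span}\{Y_n^m:n\leq N\}$: testing with $\omega$ kills the quadratic term, since $(\mathbf{u}\cdot\nabla\omega,\omega)=\tfrac12\int\mathbf{u}\cdot\nabla(\omega^2)=0$ because $\mathrm{div}\,\mathbf{u}=0$, while the linear convection term carries a single derivative $\partial_\varphi$ and is absorbed into the dissipation $\nu\|\nabla\omega\|_{L^2}^2$. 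This yields the a priori bounds giving the class \eqref{E:WS_Class}, and a difference estimate using two-dimensional product inequalities on $S^2$ gives uniqueness. This part is routine.

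For the conservation of $\omega_{=1}$ in \eqref{E:NP_1and3} I would test \eqref{E:Weak_Form} with $Y_1^m$ and show every term vanishes. The viscous term gives $\nu((\Delta+2)\omega,Y_1^m)=\nu(\omega,(\Delta+2)Y_1^m)=0$ since $-\Delta Y_1^m=2Y_1^m$. The linear convection term cannot produce degree one: $\cos\theta\,\partial_\varphi$ shifts the degree by $\pm1$, so only degrees zero and two of $(I+6\Delta^{-1})\omega$ could contribute, and the degree-two part is killed by the multiplier. The nonlinear term is where Killing fields enter: writing $\mathbf{u}\cdot\nabla\omega=\mathrm{div}(\omega\mathbf{u})$ and integrating by parts, $(\mathbf{u}\cdot\nabla\omega,Y_1^m)$ reduces, via $Y_1^m=-\tfrac12\Delta Y_1^m$ and the antisymmetry of the Jacobian-type triple product, to an expression $\int(\mathbf{w}\cdot\nabla\psi)\,\Delta\psi$ with $\psi=\Delta^{-1}\omega$ and $\mathbf{w}=\mathbf{n}_{S^2}\times\nabla Y_1^m$ a Killing field; since the flow of a Killing field acts by isometries and hence preserves $\int|\nabla\psi|^2$, this integral is zero. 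Thus $\tfrac{d}{dt}\omega_{=1}=0$.

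For the decay of $\omega_{\geq3}$ I would test with $\omega_{\geq3}$; set $\omega_{\leq2}=\omega_{=1}+\omega_{=2}$ and $\mathbf{u}_{=n}=\mathbf{n}_{S^2}\times\nabla\Delta^{-1}\omega_{=n}$. The viscous term gives $-\nu\sum_{n\geq3}(\lambda_n-2)|\omega_n^m|^2\leq-10\nu\|\omega_{\geq3}\|_{L^2}^2$, the source of $e^{-10\nu t}$ (here $\lambda_3-2=10$). The decisive algebraic point is that degrees $\leq2$ do not feed degree $\geq3$: the linear term annihilates degree two and sends degree one into degrees zero and two; for the nonlinearity, the self-interaction of any single block vanishes because its stream function is proportional to its vorticity (so $\mathbf{u}_{=n}\cdot\nabla\omega_{=n}=0$), and advection by the degree-one Killing velocity preserves the degree, so $\mathbf{u}_{\leq2}\cdot\nabla\omega_{\leq2}$ stays in degrees $\leq2$. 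Consequently $\omega_{\geq3}\equiv0$ is invariant, and the only surviving interaction in the $\omega_{\geq3}$-balance is the enstrophy exchange between degrees $\geq3$ and the degree-two block. \emph{Controlling this exchange without destroying the clean rate is the main obstacle}, since the degree-two modes do not decay (they tend to $\omega_\infty\neq0$), so a naive Gronwall bound is unavailable; I expect the resolution to rest on the fine selection rules for the triple products $\int(\mathbf{n}_{S^2}\times\nabla Y_{\ell_1})\cdot\nabla Y_{\ell_2}\,Y_{\ell_3}$ together with the conservation structure of the convection term.

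Finally, for \eqref{E:NP_2} I would exploit that, with $\omega_{=1}$ frozen and $\omega_{\geq3}$ decaying, the degree-two coefficients obey a constant-coefficient linear inhomogeneous system: the degree-two self-advection vanishes, both $\mathbf{u}_{=1}\cdot\nabla\omega_{=2}$ and $\mathbf{u}_{=2}\cdot\nabla\omega_{=1}$ remain in degree two by Killing degree-preservation, and the forcing comes from $\omega_{=1}$ through the linear term, up to contributions from $\omega_{\geq3}$ that are controlled by $\|\omega_{0,\geq3}\|_{L^2}$ and decay in time. Its unique equilibrium is $\omega_\infty$, which I would pin down by solving the stationary $5\times5$ system and matching \eqref{E:Def_OmLim}, checking in particular that it is generically nonzero. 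The estimate \eqref{E:NP_2} then follows from an energy estimate for $\omega_{=2}-\omega_\infty$: absorbing half of the degree-two dissipation to control the decaying $\omega_{\geq3}$-coupling leaves the rate $e^{-2\nu t}$ and produces the forcing term $\sigma_2/\nu$, while the Gronwall factor coming from the time-integrable coupling $\|\omega_{\geq3}(t)\|_{L^2}\leq e^{-10\nu t}\|\omega_{0,\geq3}\|_{L^2}$ produces $\sigma_1=\exp(C_1\|\omega_{0,\geq3}\|_{L^2}/\nu)$. Besides the $\omega_{\geq3}$ exchange flagged above, the remaining care is in verifying the explicit equilibrium and in closing the two decay estimates in the correct order.
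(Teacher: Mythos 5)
Your architecture matches the paper's (Galerkin well-posedness, conservation of the degree-one block via a Killing-field identity, exponential decay of the high modes, then a $5\times5$ linear ODE system for the degree-two coefficients and an explicit solve for $\omega_\infty$), and your isometry-flow justification of $\int_{S^2}(\Delta\psi)(X\cdot\nabla\psi)\,d\mathcal{H}^2=0$ for Killing $X$ is a legitimate alternative to the paper's covariant-Hessian computation in Lemma \ref{L:Killing}. But there is one genuine gap, and it sits exactly where you flagged it: the decay of $\omega_{\geq3}$. Testing with $\omega_{\geq3}$ itself leaves the exchange term $(\mathbf{u}_{\geq3}\cdot\nabla\omega_{=2},\omega_{\geq3})_{L^2(S^2)}$, which is quadratic in $\omega_{\geq3}$ with a coefficient of size $\|\omega_{=2}\|_{L^2(S^2)}$; since $\omega_{=2}$ does not decay (it tends to $\omega_\infty\neq0$), Gronwall would only give the rate $-20\nu+C\|\omega_{=2}\|$ for $\|\omega_{\geq3}\|^2$, which fails for large data and is moreover circular, because the bound on $\omega_{=2}$ is derived \emph{after} the decay of $\omega_{\geq3}$. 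No selection rule for the triple products makes this term vanish.

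The paper's resolution is a different multiplier: derive the equation for $\omega_{\geq2}=\omega-\omega_{0,=1}$ and pair it with $(I+6\Delta^{-1})\omega_{\geq2}$. Because $\mathbf{u}_{\geq2}\cdot\nabla\Delta^{-1}\omega_{\geq2}=0$, one has $\mathbf{u}_{\geq2}\cdot\nabla\omega_{\geq2}=\mathbf{u}_{\geq2}\cdot\nabla(I+6\Delta^{-1})\omega_{\geq2}$, and then the skew-symmetry of advection by a divergence-free field ($(\mathbf{v}\cdot\nabla g,g)=0$) kills the \emph{entire} nonlinear self-interaction of $\omega_{\geq2}$ in one stroke, including the degree-$2$/degree-$\geq3$ exchange you were worried about. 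Since $(I+6\Delta^{-1})$ annihilates the degree-two block ($\lambda_2=6$) and its symbol $1-6/\lambda_n$ lies in $[1/2,1)$ for $n\geq3$, the resulting weighted enstrophy controls $\|\omega_{\geq3}\|_{L^2(S^2)}^2$ and the viscous term alone delivers the unconditional rate $e^{-10\nu t}$; the linear convection term dies by integrating by parts in $\varphi$, and the $X$-advection term dies by the Killing identity of Lemma \ref{L:S2Int_Kil}. Once this step is in place, the rest of your plan goes through essentially as in the paper, with the one refinement that the $\omega_{\geq3}$-coupling enters the degree-two system both as a decaying forcing $\bm{f}_{\geq3}(t)$ and as a decaying matrix perturbation $\bm{M}(t)$ of the constant-coefficient operator $4\nu\bm{I}_5+i\bm{A}$; the time-integrability of $\|\bm{M}(t)\|$ is what produces your anticipated factor $\sigma_1$.
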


Note that the last term of \eqref{E:Weak_Form} is well-defined since
\begin{align*}
  \mathbf{u} = \mathbf{n}_{S^2}\times\nabla\Delta^{-1}\omega \in H^2(S^2) \subset L^\infty(S^2)
\end{align*}
for $\omega\in H_0^1(S^2)$ by the Sobolev embedding (see e.g. \cite{Aub98}).
Also, a test function $\psi$ in \eqref{E:Weak_Form} is assumed to be real-valued, but in fact we may take a complex-valued $\psi$ by considering its real and imaginary parts separately.

\begin{remark} \label{R:OmLim}
  Compared to the linear stability case (see \eqref{E:Lim_Lin}), the equilibrium $\omega_{0,=1}+\omega_\infty$ is complicated but still determined explicitly.
  Moreover, the $Y_2^0$- and $Y_2^{\pm2}$-components of the equilibrium does not vanish in general for the nonlinear stability problem.
  We also note that the behavior \eqref{E:NP_1and3} of $\omega_{=1}(t)$ and $\omega_{\geq3}(t)$ is the same as in the linear stability case (see \cite[Theorem 3.1]{Miu21pre}) and only the behavior \eqref{E:NP_2} of $\omega_{=2}(t)$ is different.
\end{remark}

\begin{remark} \label{R:Onejet}
  For the one-jet case, we can show that the $Y_1^m$-components of a solution to the nonlinear stability problem is preserved in time as in \eqref{E:NP_1and3} and the $Y_n^m$-components with $n\geq2$ decay exponentially in time.
  The proof is the same as that of Theorem \ref{T:NLS_Main} given in Section \ref{S:NLS} and much easier, so we just give the outline in Section \ref{S:Onejet}.
\end{remark}

It is expected that we can show that the enhanced dissipation occurs for a part of a solution to \eqref{E:NL_Pert} as in the linear stability case \cite{Miu21pre,MaeMiupre}, but we need to analyze carefully the interaction between various components of a solution with different longitudinal wave numbers through the convection term.
The study of the enhanced dissipation for the nonlinear stability problem will be done in another paper.

The result of Theorem \ref{T:NLS_Main} can be extended to the case of a rotating sphere.
In that case, we consider the vorticity equation with Coriolis force (see e.g. \cite{Ili04,SaTaYa13,SaTaYa15})
\begin{align} \label{E:Rotate}
  \left\{
  \begin{aligned}
    &\partial_t\zeta+\mathbf{v}\cdot\nabla\zeta+2\Omega\,\partial_\varphi\Delta^{-1}\zeta-\nu(\Delta\zeta+2\zeta) = f \quad\text{on}\quad S^2\times(0,\infty), \\
    &\mathbf{v} = \mathbf{n}_{S^2}\times\nabla\Delta^{-1}\zeta \quad\text{on}\quad S^2\times(0,\infty),
  \end{aligned}
  \right.
\end{align}
where $\Omega\in\mathbb{R}$ is the rotation speed of a sphere.
For each $n\in\mathbb{N}$ and $a\in\mathbb{R}$, the Kolmogorov type flow \eqref{E:Zna_Intro} is still a stationary solution to \eqref{E:Rotate} with $f=a\nu(\lambda_n-2)Y_n^0$ since it is independent of the longitude $\varphi$.
When $n=2$, the nonlinear stability problem for \eqref{E:Rotate} around the two-jet Kolmogorov type flow \eqref{E:TKol} is
\begin{align} \label{E:NLP_Rot}
  \left\{
  \begin{aligned}
    \partial_t\zeta &= \nu(\Delta\zeta+2\zeta)-2\Omega\,\partial_\varphi\Delta^{-1}\zeta \\
    &\qquad -\frac{a}{4}\sqrt{\frac{5}{\pi}}\cos\theta\,\partial_\varphi(I+6\Delta^{-1})\zeta-\mathbf{v}\cdot\nabla\zeta \quad\text{on}\quad S^2\times(0,\infty), \\
    \mathbf{v} &= \mathbf{n}_{S^2}\times\nabla\Delta^{-1}\zeta \quad\text{on}\quad S^2\times(0,\infty), \\
    \zeta|_{t=0} &= \zeta_0 \quad\text{on}\quad S^2.
  \end{aligned}
  \right.
\end{align}
Then we easily find by direct calculations (see Section \ref{S:Rotate} for the outline) that solutions $\omega$ to \eqref{E:NL_Pert} and $\zeta$ to \eqref{E:NLP_Rot} are related by
\begin{align} \label{E:Sol_CoV}
  \omega(\theta,\varphi,t) = \zeta(\theta,\varphi-\Omega t,t)+2\Omega\cos\theta = \zeta(\theta,\varphi-\Omega t,t)+4\sqrt{\frac{\pi}{3}}\,\Omega Y_1^0(\theta)
\end{align}
in spherical coordinates.
Thus, for a given $\zeta_0\in L_0^2(S^2)$, we apply Theorem \ref{T:NLS_Main} to
\begin{align*}
  \omega_0(\theta,\varphi) = \zeta_0(\theta,\varphi)+2\Omega\cos\theta = \zeta_0(\theta,\varphi)+4\sqrt{\frac{\pi}{3}}\,\Omega Y_1^0(\theta),
\end{align*}
substitute \eqref{E:Sol_CoV} for \eqref{E:NP_1and3} and \eqref{E:NP_2}, make the change of variable $\varphi\mapsto\varphi+\Omega t$ which does not change the $L^2(S^2)$-norm, and use $Y_n^m(\theta,\varphi+\Omega t)=e^{im\Omega t}Y_n^m(\theta,\varphi)$ to obtain the following result.

\begin{theorem} \label{T:Rotate}
  For all real-valued initial data $\zeta_0\in L_0^2(S^2)$, there exists a unique global weak solution $\zeta$ to \eqref{E:NLP_Rot} in the class \eqref{E:WS_Class} and
  \begin{align*}
    \zeta_{=1}(t) = \sum_{m=0,\pm1}e^{im\Omega t}\zeta_{0,1}^mY_1^m, \quad \|\zeta_{\geq3}(t)\| \leq e^{-10\nu t}\|\zeta_{0,\geq3}\|_{L^2(S^2)}
  \end{align*}
  for all $t\geq0$.
  Moreover, if we set
  \begin{align*}
    \alpha = \frac{1}{2\sqrt{6\pi}}\,\zeta_{0,1}^1\in\mathbb{C}, \quad b_\Omega = \frac{1}{2\sqrt{3\pi}}\,\zeta_{0,1}^0+\frac{2}{3}\,\Omega \in \mathbb{R}
  \end{align*}
  and define $\zeta_\infty(t)=\sum_{m=-2}^2e^{im\Omega t}\zeta_{2,\infty}^mY_2^m$ by
  \begin{align*}
    \zeta_{2,\infty}^0 &= -12a\cdot\frac{|\alpha|^2(4\nu^2+|\alpha|^2+b_\Omega^2)}{(4\nu^2+4|\alpha|^2+b_\Omega^2)(16\nu^2+4|\alpha|^2+b_\Omega^2)}, \\
    \zeta_{2,\infty}^1 &= \frac{\sqrt{6}\,i\alpha(4\nu+2ib_\Omega)}{4|\alpha|^2+(4\nu+ib_\Omega)(4\nu+2ib_\Omega)}(\zeta_{2,\infty}^0+a), \quad \zeta_{2,\infty}^{-1} = -\overline{\zeta_{2,\infty}^1}, \\
    \zeta_{2,\infty}^2 &= \frac{2i\alpha}{4\nu+2ib_\Omega}\,\zeta_{2,\infty}^1, \quad \zeta_{2,\infty}^{-2} = \overline{\zeta_{2,\infty}^2},
  \end{align*}
  then we have
  \begin{align*}
    \|\zeta_{=2}(t)-\zeta_\infty(t)\|_{L^2(S^2)} \leq \sigma_1\left(\|\zeta_{0,=2}-\zeta_\infty(0)\|_{L^2(S^2)}+\frac{\sigma_2}{\nu}\right)
  \end{align*}
  for all $t\geq 0$, where $\sigma_1$ and $\sigma_2$ are given by \eqref{E:Sigma_12} with $\omega_{0,\geq3}$ and $\omega_\infty$ replaced by $\zeta_{0,\geq3}$ and $\zeta_\infty(0)$, respectively, and with constants $C_1,C_2>0$ independent of $t$, $\nu$, $a$, $\Omega$, and $\zeta_0$.
\end{theorem}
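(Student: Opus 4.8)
The plan is to deduce Theorem \ref{T:Rotate} directly from Theorem \ref{T:NLS_Main} by means of the explicit change of variables \eqref{E:Sol_CoV}, so that essentially no new analysis is required once the correspondence between the two problems is established. First I would verify that \eqref{E:Sol_CoV} sets up a bijection between weak solutions: given $\zeta_0\in L_0^2(S^2)$, set $\omega_0=\zeta_0+2\Omega\cos\theta$ and let $\omega$ be the unique weak solution to \eqref{E:NL_Pert} provided by Theorem \ref{T:NLS_Main}; then define $\zeta(\theta,\varphi,t)=\omega(\theta,\varphi+\Omega t,t)-2\Omega\cos\theta$ and check that $\zeta$ is the unique weak solution to \eqref{E:NLP_Rot} with data $\zeta_0$. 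The verification is a direct computation (whose outline is deferred to Section \ref{S:Rotate}): the rotation $\varphi\mapsto\varphi-\Omega t$ turns $\partial_t$ into $\partial_t-\Omega\partial_\varphi$, the viscous and convection operators are equivariant under rotations of $S^2$ about the polar axis, the zonal term $2\Omega\cos\theta=4\sqrt{\pi/3}\,\Omega Y_1^0$ lies in $\ker(\Delta+2)$ and is advected trivially by any such rotation, and the combination of the frame rotation with this planetary vorticity reproduces exactly the Coriolis term $2\Omega\,\partial_\varphi\Delta^{-1}\zeta$. Since a time-dependent rotation is a smooth isometry and $2\Omega\cos\theta\in C^\infty(S^2)\cap L_0^2(S^2)$, the transformation preserves the solution class \eqref{E:WS_Class}, and existence and uniqueness for \eqref{E:NLP_Rot} follow from those for \eqref{E:NL_Pert}.

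Next I would track how the quantities in \eqref{E:Def_Alb}--\eqref{E:Def_OmLim} transform. Since $2\Omega\cos\theta$ affects only the $Y_1^0$-component, the shifted data $\omega_0=\zeta_0+2\Omega\cos\theta$ satisfies $\omega_{0,n}^m=\zeta_{0,n}^m$ for all $(n,m)\neq(1,0)$ and $\omega_{0,1}^0=\zeta_{0,1}^0+4\sqrt{\pi/3}\,\Omega$. Hence the parameter $\alpha$ in \eqref{E:Def_Alb} is unchanged, whereas a short computation gives
\[
b=\frac{1}{2\sqrt{3\pi}}\,\omega_{0,1}^0=\frac{1}{2\sqrt{3\pi}}\,\zeta_{0,1}^0+\frac{2}{3}\,\Omega=b_\Omega.
\]
Substituting $b=b_\Omega$ into \eqref{E:Def_OmLim} shows that the equilibrium coefficients coincide, $\omega_{2,\infty}^m=\zeta_{2,\infty}^m$, so that $\omega_\infty=\sum_{m=-2}^2\zeta_{2,\infty}^mY_2^m=\zeta_\infty(0)$; this is precisely why $\alpha$ and $b_\Omega$ appear in the statement. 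Consequently $\zeta_{0,=2}=\omega_{0,=2}$, $\|\zeta_{0,\geq3}\|_{L^2(S^2)}=\|\omega_{0,\geq3}\|_{L^2(S^2)}$, and $\|\zeta_\infty(0)\|_{L^2(S^2)}=\|\omega_\infty\|_{L^2(S^2)}$, which matches the stated replacements in $\sigma_1$ and $\sigma_2$.

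Finally I would translate the three estimates of Theorem \ref{T:NLS_Main} component by component. Expanding $\omega(\theta,\varphi+\Omega t,t)$ in spherical harmonics and using $Y_n^m(\theta,\varphi+\Omega t)=e^{im\Omega t}Y_n^m(\theta,\varphi)$ yields $\zeta_n^m(t)=e^{im\Omega t}\omega_n^m(t)$ for all $(n,m)\neq(1,0)$, while for $(1,0)$ the subtracted zonal term exactly cancels the shift, so that $\zeta_1^0(t)=\omega_{0,1}^0-4\sqrt{\pi/3}\,\Omega=\zeta_{0,1}^0$. Combined with the conservation $\omega_{=1}(t)=\omega_{0,=1}$ from \eqref{E:NP_1and3}, this gives $\zeta_{=1}(t)=\sum_{m=0,\pm1}e^{im\Omega t}\zeta_{0,1}^mY_1^m$. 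Because the phase factors $e^{im\Omega t}$ have modulus one, they drop out of every $L^2(S^2)$-norm, whence $\|\zeta_{\geq3}(t)\|_{L^2(S^2)}=\|\omega_{\geq3}(t)\|_{L^2(S^2)}$ and $\|\zeta_{=2}(t)-\zeta_\infty(t)\|_{L^2(S^2)}=\|\omega_{=2}(t)-\omega_\infty\|_{L^2(S^2)}$; the bounds \eqref{E:NP_1and3} and \eqref{E:NP_2} then deliver the claimed decay and convergence (bounding $e^{-2\nu t}\leq1$ in the degree-two estimate). The main obstacle in this argument is the rigorous verification of the solution correspondence \eqref{E:Sol_CoV} at the level of weak solutions, in particular confirming that the frame rotation together with the planetary vorticity $2\Omega\cos\theta$ produces precisely the Coriolis term and generates no spurious forcing; once this is in hand, all remaining steps are purely algebraic.
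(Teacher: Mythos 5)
Your proposal is correct and follows essentially the same route as the paper: apply Theorem \ref{T:NLS_Main} to $\omega_0=\zeta_0+2\Omega\cos\theta$, verify the correspondence \eqref{E:Sol_CoV} (which the paper does in Section \ref{S:Rotate}), note that the shift only changes the $Y_1^0$-component so that $b=b_\Omega$ and $\alpha$ is unchanged, and transfer the estimates using $Y_n^m(\theta,\varphi+\Omega t)=e^{im\Omega t}Y_n^m(\theta,\varphi)$ and the invariance of the $L^2$-norm under the rotation. All the component computations you give (including $\frac{1}{2\sqrt{3\pi}}\cdot 4\sqrt{\pi/3}\,\Omega=\frac{2}{3}\Omega$ and the cancellation in $\zeta_1^0(t)$) check out.
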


Let us explain the outline of the proof of Theorem \ref{T:NLS_Main}.
Since $S^2$ is 2D, we can show the global existence and uniqueness of a weak solution to \eqref{E:NL_Pert} by the Galerkin method with basis functions $Y_n^m$ and the energy method as in the case of the Navier--Stokes equations in 2D flat domains (see e.g. \cite{Tem01,BoyFab13}).
Hence we omit details in this paper.
To prove \eqref{E:NP_1and3} and \eqref{E:NP_2}, we first take the inner product of \eqref{E:NL_Pert} with $Y_1^m$, $|m|=0,1$.
Then we find that $\frac{d}{dt}\omega_1^m(t)=0$ and thus $\omega_{=1}(t)=\omega_{0,=1}$ by using $-\Delta Y_n^m=\lambda_nY_n^m$ with $\lambda_1=2$ and $\lambda_2=6$, a recurrence relation for $\cos\theta\,Y_n^m$ (see \eqref{E:Y_Rec}), and the fact that a vector field of the form
\begin{align*}
  \mathbf{n}_{S^2}\times\nabla\chi, \quad \chi = \cos\theta, \sin\theta\cos\varphi, \sin\theta\sin\varphi
\end{align*}
is a Killing vector field on $S^2$.
Here the last fact is essential to show that the inner product of the convection term with $Y_1^m$ vanishes (see Lemma \ref{L:S2Int_Y1m}).
To prove that result, we apply an identity for a Killing vector field on a general manifold given in Lemma \ref{L:Killing}, which seems to have its own interest.

Next we derive an equation for $\omega_{\geq2}(t)=\omega(t)-\omega_{0,=1}$ from \eqref{E:NL_Pert} and take the inner product of that equation with $(I+6\Delta^{-1})\omega_{\geq2}(t)$.
Then we obtain a differential inequality which yields the estimate \eqref{E:NP_1and3} for $\omega_{\geq3}(t)$ by applying $\lambda_2=6$ and $\lambda_n\geq12$ for $n\geq3$, integration by parts, and the identity
\begin{align} \label{E:Intro_XgOm}
  \bigl(X\cdot\nabla\Delta^{-1}\omega_{\geq2}(t),\omega_{\geq2}(t)\bigr)_{L^2(S^2)} = \bigl(X\cdot\nabla\omega_{\geq2}(t),\Delta^{-1}\omega_{\geq2}(t)\bigr)_{L^2(S^2)} = 0
\end{align}
given in Lemma \ref{L:S2Int_Kil}, where $X$ is given by
\begin{align} \label{E:Intro_DefX}
  X = \mathbf{n}_{S^2}\times\nabla\Delta^{-1}\omega_{0,=1} = -\frac{1}{2}\mathbf{n}_{S^2}\times\nabla\omega_{0,=1}.
\end{align}
Here we do not have an estimate for $\omega_{\geq2}(t)=\omega_{=2}(t)+\omega_{\geq3}(t)$ since $(I+6\Delta^{-1})\omega_{=2}(t)=0$ by $\lambda_2=6$.
Moreover, the identity \eqref{E:Intro_XgOm} is used to show
\begin{align*}
  \bigl(X\cdot\nabla(I+2\Delta^{-1})\omega_{\geq2}(t),(I+6\Delta^{-1})\omega_{\geq2}(t)\bigr)_{L^2(S^2)} = 0,
\end{align*}
where the term $X\cdot\nabla(I+2\Delta^{-1})\omega_{\geq2}(t)$ comes from the interaction between the nondissipative part $\omega_{0,=1}$ and the dissipative part $\omega_{\geq2}(t)$ through the convection term.
We further note that, since the above $X$ is of the form $X(x)=\mathbf{a}\times x$, $x\in S^2$ with some $\mathbf{a}\in\mathbb{R}^3$ expressed in terms of $\omega_{0,1}^0$ and $\omega_{0,1}^1$, it is a Killing vector field on $S^2$ and thus we can apply again the identity for a Killing vector field on a general manifold given in Lemma \ref{L:Killing} to get the identity \eqref{E:Intro_XgOm}.

Lastly, we derive an equation for $\omega_{=2}(t)=\omega_{\geq2}(t)-\omega_{\geq3}(t)$, prove the estimate \eqref{E:NP_2}, and determine $\omega_\infty=\sum_{m=-2}^2\omega_{2,\infty}^mY_2^m$.
It turns out that the evolution of $\omega_{=2}(t)$ is described by a system of linear ordinary differential equations (ODEs) of the form
\begin{align} \label{E:Intro_ODE}
  \begin{aligned}
    \frac{d}{dt}\bm{\omega}(t) &= -\{4\nu\bm{I}_5+i\bm{A}+\bm{M}(t)\}\bm{\omega}(t)+\bm{f}_{\geq3}(t)+\bm{c}, \quad t>0, \\
    \bm{\omega}(t) &= \bigl(\omega_2^2(t),\dots,\omega_2^{-2}(t)\bigr)^T \in \mathbb{C}^5,
  \end{aligned}
\end{align}
where $\bm{I}_5$ is the $5\times 5$ identity matrix, $\bm{A}$ is a constant self-adjoint matrix, $\bm{M}(t)$ and $\bm{f}_{\geq3}(t)$ are a matrix-valued function and a vector field decaying exponentially in time, and $\bm{c}$ is a constant vector coming from the nondissipative part $\omega_{0,=1}$ through the perturbation operator (see \eqref{E:Def_Amk} and \eqref{E:ODE_Nota} for the precise definitions).
Then, noting that $4\nu\bm{I}_5+i\bm{A}$ is invertible since $\bm{A}$ is self-adjoint, we set
\begin{align*}
  \bm{\omega}_\infty = (\omega_{2,\infty}^2,\dots\omega_{2,\infty}^{-2})^T = (4\nu\bm{I}_5+i\bm{A})^{-1}\mathbf{c}
\end{align*}
and use \eqref{E:Intro_ODE} to derive an estimate for $\bm{\omega}(t)-\bm{\omega}_\infty$ which corresponds to \eqref{E:NP_2}.
Moreover, using the explicit forms of $\bm{A}$ and $\bm{c}$, we can solve $(4\nu\bm{I}_5+i\bm{A})\bm{\omega}_\infty=\bm{c}$ to determine $\bm{\omega}_\infty$.
Here the matrix $\bm{A}$ comes from the interaction between the nondissipative part $\omega_{0,=1}$ and the dissipative part $\omega_{=2}(t)$ through the convection term.
Indeed, each entry of $\bm{A}$ is given by the $Y_2^m$-component of $X\cdot\nabla\omega_{=2}$ with $X$ given by \eqref{E:Intro_DefX}.
Also, we can express $X\cdot\nabla Y_2^m$ as a linear combination of $Y_2^0$, $Y_2^{\pm1}$, and $Y_2^{\pm2}$ for each $|m|=0,1,2$ (see Lemma \ref{L:Y2m_Kil}), so we can write $\bm{A}$ explicitly in terms of $\omega_{0,1}^0$ and $\omega_{0,1}^1$ and find that $\bm{A}$ is self-adjoint.

The rest of this paper is organized as follows.
In Section \ref{S:Pre} we fix notations and give auxiliary results on calculus on a manifold and $S^2$.
The main part of this paper is Section \ref{S:NLS}, which is devoted to the proof of Theorem \ref{T:NLS_Main}.
In Section \ref{S:PoYK} we give the proof of Lemma \ref{L:Y2m_Kil} which consists of elementary but slightly long calculations.
As appendices, we briefly explain the behavior of a perturbation for the one-jet Kolmogorov type flow in Section \ref{S:Onejet} and observe that solutions to \eqref{E:NL_Pert} and \eqref{E:NLP_Rot} are related by \eqref{E:Sol_CoV} in Section \ref{S:Rotate}.

\section{Preliminaries} \label{S:Pre}
We fix notations and give auxiliary results on calculus on a manifold and $S^2$.

\subsection{Calculus on a manifold} \label{SS:Manifold}
For $n\geq2$ let $M$ be an $n$-dimensional Riemannian manifold without boundary.
Note that $M$ is a real manifold, and in this subsection we only consider real-valued functions and vector fields on $M$.
Let $\langle\cdot,\cdot\rangle$, $\nabla$, and $d\mathcal{H}^n$ be the Riemannian metric, the Levi-Civita connection, and the volume form on $M$.
For a function $f$ and a vector field $X$ on $M$, we write $\nabla f$, $\mathrm{div}\,X$, and $\nabla_Xf=\langle X,\nabla f\rangle$ for the gradient of $f$, the divergence of $X$, and the directional derivative of $f$ along $X$.
We also denote by $\Delta=\mathrm{div}\,\nabla$ the Laplace--Beltrami operator on $M$.
Note that
\begin{align} \label{E:Compat}
  \nabla_X\langle Y,Z\rangle = \langle\nabla_XY,Z\rangle+\langle Y,\nabla_XZ\rangle
\end{align}
for vector fields $X,Y,Z$ on $M$, since $\nabla$ is compatible with $\langle\cdot,\cdot\rangle$.
For a function $f$ on $M$, let $\nabla^2f$ be the covariant Hessian of $f$ given by
\begin{align} \label{E:Def_Cov}
  (\nabla^2f)(Y,X) = \nabla_X(\nabla_Yf)-\nabla_{\nabla_XY}f = \nabla_X\langle Y,\nabla f\rangle-\langle\nabla_XY,\nabla f\rangle
\end{align}
for vector fields $X,Y$ on $M$ (see \cite{Lee18}).
As in the flat space case, $\nabla^2f$ is symmetric in the sense that $(\nabla^2f)(Y,X)=(\nabla^2f)(X,Y)$.
A (smooth) vector field $X$ on $M$ is called Killing if $\langle\nabla_YX,Z\rangle+\langle Y,\nabla_ZX\rangle=0$ for all vector fields $Y$ and $Z$ on $M$.
Note that $\mathrm{div}\,X=0$ on $M$ if $X$ is Killing, since $\mathrm{div}\,X=\sum_{i=1}^n\langle\nabla_{\tau_i}X,\tau_i\rangle$ at each $x\in M$, where $\{\tau_1,\dots,\tau_n\}$ is an orthonormal basis of the tangent plane of $M$ at $x$.
For $k\geq0$ we denote by $H^k(M)$ the Sobolev spaces of $L^2$ functions on $M$ (see e.g. \cite{Aub98}).

\begin{lemma} \label{L:Killing}
  Let $X$ be a Killing vector field on $M$.
  Then
  \begin{align} \label{E:Killing}
    \int_M\{(\Delta f)\langle\nabla g,X\rangle+(\Delta g)\langle\nabla f,X\rangle\}\,d\mathcal{H}^n = 0
  \end{align}
  for all real-valued functions $f,g\in H^2(M)$.
\end{lemma}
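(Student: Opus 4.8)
The plan is to integrate by parts twice and let the two guises of the Killing condition — the defining antisymmetry and the consequence $\mathrm{div}\,X=0$ — annihilate everything that survives. First I would reduce to smooth $f,g$. The bilinear form $(f,g)\mapsto\int_M(\Delta f)\langle\nabla g,X\rangle\,d\mathcal{H}^n$ is continuous on $H^2(M)\times H^2(M)$: one has $\|\Delta f\|_{L^2}\leq C\|f\|_{H^2}$, while $\langle\nabla g,X\rangle\in L^2$ with $\|\langle\nabla g,X\rangle\|_{L^2}\leq\|X\|_{L^\infty}\|g\|_{H^1}$ since the Killing field $X$ is smooth, hence bounded on the compact manifold (in the application $M=S^2$) to which we apply the lemma. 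Thus, by density of $C^\infty(M)$ in $H^2(M)$, it suffices to prove \eqref{E:Killing} for smooth $f,g$, where all the manipulations below are classical and the divergence theorem produces no boundary terms because $\partial M=\emptyset$.

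Next I would write $\Delta f=\mathrm{div}\,\nabla f$ and integrate by parts to obtain
\[
  \int_M(\Delta f)\langle\nabla g,X\rangle\,d\mathcal{H}^n = -\int_M\nabla_{\nabla f}\langle\nabla g,X\rangle\,d\mathcal{H}^n,
\]
and then expand the integrand by the compatibility identity \eqref{E:Compat} as $\nabla_{\nabla f}\langle\nabla g,X\rangle=\langle\nabla_{\nabla f}\nabla g,X\rangle+\langle\nabla g,\nabla_{\nabla f}X\rangle$. Adding the analogous identity with $f$ and $g$ interchanged splits the left-hand side of \eqref{E:Killing} into two integrals: one over the ``$\nabla_\cdot X$'' terms $\langle\nabla g,\nabla_{\nabla f}X\rangle+\langle\nabla f,\nabla_{\nabla g}X\rangle$, and one over the ``second Hessian'' terms $\langle\nabla_{\nabla f}\nabla g,X\rangle+\langle\nabla_{\nabla g}\nabla f,X\rangle$.

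The first integral vanishes pointwise: taking $Y=\nabla f$ and $Z=\nabla g$ in the definition of a Killing field gives $\langle\nabla_{\nabla f}X,\nabla g\rangle+\langle\nabla f,\nabla_{\nabla g}X\rangle=0$, which, by symmetry of $\langle\cdot,\cdot\rangle$, is precisely the integrand. For the second integral I would invoke the symmetry of the covariant Hessian \eqref{E:Def_Cov}: writing $\langle\nabla_{\nabla f}\nabla g,X\rangle=(\nabla^2g)(X,\nabla f)=(\nabla^2g)(\nabla f,X)=\langle\nabla_X\nabla g,\nabla f\rangle$, and similarly $\langle\nabla_{\nabla g}\nabla f,X\rangle=\langle\nabla_X\nabla f,\nabla g\rangle$, their sum collapses by \eqref{E:Compat} into the single directional derivative $\nabla_X\langle\nabla f,\nabla g\rangle$. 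One more integration by parts turns this into $\int_M(\mathrm{div}\,X)\langle\nabla f,\nabla g\rangle\,d\mathcal{H}^n$, which is zero because $\mathrm{div}\,X=0$ for a Killing field.

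The only genuinely delicate point is the bookkeeping that recasts the leftover Hessian terms as a pure divergence: one must recognize $\langle\nabla_{\nabla f}\nabla g,X\rangle$ as a value of $\nabla^2g$ and use the symmetry of $\nabla^2g$ to move $X$ out of the derivative slot, after which compatibility does the rest. What makes the whole statement work is that the Killing hypothesis is used in two complementary ways — directly, to kill the first integral, and through $\mathrm{div}\,X=0$, to kill the second.
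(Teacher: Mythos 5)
Your proposal is correct and follows essentially the same route as the paper: integrate by parts once, split the resulting integrand via metric compatibility and the symmetry of the covariant Hessian into a term killed pointwise by the Killing antisymmetry and a term equal to $\nabla_X\langle\nabla f,\nabla g\rangle$, then integrate by parts again and use $\mathrm{div}\,X=0$. The only difference is your explicit density reduction to smooth $f,g$, which the paper leaves implicit.
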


\begin{proof}
  Let $F=\nabla_{\nabla f}\langle\nabla g,X\rangle+\nabla_{\nabla g}\langle\nabla f,X\rangle$ on $M$.
  Then
  \begin{align*}
    \int_M\{(\Delta f)\langle\nabla g,X\rangle+(\Delta g)\langle\nabla f,X\rangle\}\,d\mathcal{H}^n = -\int_MF\,d\mathcal{H}^n
  \end{align*}
  by integration by parts.
  Moreover, by \eqref{E:Def_Cov} and the symmetry of $\langle\cdot,\cdot\rangle$, we have
  \begin{align*}
    &(\nabla^2f)(X,\nabla g)+(\nabla^2g)(X,\nabla f) \\
    &\qquad = \nabla_{\nabla g}\langle X,\nabla f\rangle-\langle\nabla_{\nabla g}X,\nabla f\rangle+\nabla_{\nabla f}\langle X,\nabla g\rangle-\langle\nabla_{\nabla f}X,\nabla g\rangle \\
    &\qquad = F-(\langle\nabla_{\nabla f}X,\nabla g\rangle+\langle\nabla f,\nabla_{\nabla g}X\rangle)
  \end{align*}
  and
  \begin{align*}
    &(\nabla^2f)(\nabla g,X)+(\nabla^2g)(\nabla f,X) \\
    &\qquad = \nabla_X\langle\nabla g,\nabla f\rangle-\langle\nabla_X\nabla g,\nabla f\rangle+\nabla_X\langle\nabla f,\nabla g\rangle-\langle\nabla_X\nabla f,\nabla g\rangle \\
    &\qquad = \nabla_X\langle\nabla f,\nabla g\rangle
  \end{align*}
  on $M$.
  In the last equality, we also used \eqref{E:Compat} with $Y=\nabla f$ and $Z=\nabla g$.
  Since $\nabla^2 f$ and $\nabla^2 g$ are symmetric, it follows from the above equalities that
  \begin{align*}
    F-(\langle\nabla_{\nabla f}X,\nabla g\rangle+\langle\nabla f,\nabla_{\nabla g}X\rangle) = \nabla_X\langle\nabla f,\nabla g\rangle
  \end{align*}
  on $M$.
  Moreover, $\langle\nabla_{\nabla f}X,\nabla g\rangle+\langle\nabla f,\nabla_{\nabla g}X\rangle=0$ on $M$ since $X$ is Killing.
  Hence
  \begin{align*}
    \int_M\{(\Delta f)\langle\nabla g,X\rangle+(\Delta g)\langle\nabla f,X\rangle\}\,d\mathcal{H}^n &= -\int_MF\,d\mathcal{H}^n = -\int_M\nabla_X\langle\nabla f,\nabla g\rangle\,d\mathcal{H}^n \\
    &= \int_M(\mathrm{div}\,X)\langle\nabla f,\nabla g\rangle\,d\mathcal{H}^n = 0
  \end{align*}
  by integration by parts and $\mathrm{div}\,X=0$ on $M$ since $X$ is Killing.
\end{proof}

\subsection{Calculus on the unit sphere} \label{SS:Sphere}
Now let $M=S^2$ be the 2D unit sphere in $\mathbb{R}^3$ equipped with the Riemannian metric induced by the Euclidean metric of $\mathbb{R}^3$.
We denote by $\mathbf{a}\cdot\mathbf{b}$ and $\mathbf{a}\times\mathbf{b}$ the inner and vector products of $\mathbf{a},\mathbf{b}\in\mathbb{R}^3$.
Let $\mathbf{n}_{S^2}$ be the unit outward normal vector field of $S^2$.
For real tangential vector fields $X$ and $Y$ on $S^2$, the covariant derivative of $X$ along $Y$ is given by
\begin{align*}
  \nabla_YX = (Y\cdot\nabla^{\mathbb{R}^3})\widetilde{X}-\left(\left[(Y\cdot\nabla^{\mathbb{R}^3})\widetilde{X}\right]\cdot\mathbf{n}_{S^2}\right)\mathbf{n}_{S^2} \quad\text{on}\quad S^2,
\end{align*}
where $\nabla^{\mathbb{R}^3}$ is the standard gradient in $\mathbb{R}^3$ and $\tilde{X}$ is an extension of $X$ to an open neighborhood of $S^2$.
Note that the value of $\nabla_YX$ is independent of the choice of $\widetilde{X}$.
Then we easily see that for any $\mathbf{a}\in\mathbb{R}^3$ a real vector field $X(x)=\mathbf{a}\times x$, $x\in S^2$ is tangential and satisfies $\nabla_YX\cdot Z+Y\cdot\nabla_ZX=0$ on $S^2$ for all real tangential vector fields $Y$ and $Z$ on $S^2$, i.e. $X(x)=\mathbf{a}\times x$ is Killing on $S^2$.
We use this fact without mention in the sequel.

In what follows, we mainly consider real-valued functions on $S^2$, but we take the $L^2(S^2)$-inner product of functions with the complex spherical harmonics.
Thus we write
\begin{align*}
  (u,v)_{L^2(S^2)} = \int_{S^2}u\bar{v}\,d\mathcal{H}^2, \quad \|u\|_{L^2(S^2)} = (u,u)_{L^2(S^2)}^{1/2}
\end{align*}
for complex-valued functions $u,v\in L^2(S^2)$, where $\bar{v}$ is the complex conjugate of $v$.
Also, we sometimes abuse the notations of the inner and vector products to write
\begin{align} \label{E:Abu_Pr}
  \mathbf{a}\cdot(\mathbf{b}_1+i\mathbf{b}_2) = \mathbf{a}\cdot\mathbf{b}_1+i\mathbf{a}\cdot\mathbf{b}_2, \quad \mathbf{a}\times(\mathbf{b}_1+i\mathbf{b}_2) = \mathbf{a}\times\mathbf{b}_1+i\mathbf{a}\times\mathbf{b}_2
\end{align}
for $\mathbf{a},\mathbf{b}_1,\mathbf{b}_2\in\mathbb{R}^3$.
We do not encounter the case where $\mathbf{a}$ is complex in the sequel.

Let $\theta$ and $\varphi$ be the colatitude and longitude so that $S^2$ is parametrized by
\begin{align} \label{E:Sphrical}
  x(\theta,\varphi) = (\sin\theta\cos\varphi,\sin\theta\sin\varphi,\cos\theta), \quad \theta\in[0,\pi], \, \varphi\in[0,2\pi).
\end{align}
For a function $u$ on $S^2$, we abuse the notation $u(\theta,\varphi)=(u\circ x)(\theta,\varphi)$ so that
\begin{align} \label{E:Grad}
  \nabla u(\theta,\varphi) = \partial_\theta u(\theta,\varphi)\partial_\theta x(\theta,\varphi)+\frac{\partial_\varphi u(\theta,\varphi)}{\sin^2\theta}\,\partial_\varphi x(\theta,\varphi)
\end{align}
for the gradient of $u$.
Let $Y_n^m$ be the spherical harmonics of the form
\begin{align*}
  Y_n^m = Y_n^m(\theta,\varphi) = \sqrt{\frac{2n+1}{4\pi}\frac{(n-m)!}{(n+m)!}} \, P_n^m(\cos\theta)e^{im\varphi}, \quad n\in\mathbb{Z}_{\geq0}, \, |m|\leq n.
\end{align*}
Here $P_n^m$ are the associated Legendre functions given by
\begin{align*}
  P_n^0(s) &= \frac{1}{2^nn!}\frac{d^n}{ds^n}(s^2-1)^n, \\
  P_n^m(s) &=
  \begin{cases}
    (-1)^m(1-s^2)^{m/2}\displaystyle\frac{d^m}{ds^m}P_n^0(s), &m\geq0, \\
    (-1)^{|m|}\displaystyle\frac{(n-|m|)!}{(n+|m|)!}P_n^{|m|}(s), &m = -|m| < 0
  \end{cases}
\end{align*}
for $s\in(-1,1)$ (see \cite{Led72,DLMF}).
Note that $Y_n^{-m}=(-1)^m\overline{Y_n^m}$ by the above definitions.
It is known (see e.g. \cite{VaMoKh88,Tes14}) that $Y_n^m$ are the eigenfunctions of $-\Delta$ associated with the eigenvalue $\lambda_n=n(n+1)$ for each $n\geq0$, and the set of all $Y_n^m$ forms an orthonormal basis of $L^2(S^2)$.
Hence each $u\in L^2(S^2)$ can be expanded as
\begin{align*}
  u = \sum_{n=0}^\infty\sum_{m=-n}^nu_n^mY_n^m, \quad u_n^m = (u,Y_n^m)_{L^2(S^2)}.
\end{align*}
Moreover, if $u$ is real-valued, then $u_n^{-m}=(-1)^m\overline{u_n^m}$ since $Y_n^{-m}=(-1)^m\overline{Y_n^m}$.
It is also known that the recurrence relation
\begin{align*}
  (n-m+1)P_{n+1}^m(s)-(2n+1)sP_n^m(s)+(n+m)P_{n-1}^m(s) = 0
\end{align*}
holds (see \cite[(7.12.12)]{Led72}) and thus (see also \cite[Section 5.7]{VaMoKh88})
\begin{align} \label{E:Y_Rec}
  \cos\theta\,Y_n^m = a_n^mY_{n-1}^m+a_{n+1}^mY_{n+1}^m, \quad a_n^m = \sqrt{\frac{(n-m)(n+m)}{(2n-1)(2n+1)}}
\end{align}
for $n\in\mathbb{Z}_{\geq0}$ and $|m|\leq n$, where we consider $Y_{|m|-1}^m\equiv0$.

Let $L_0^2(S^2)$ be the space of $L^2$ functions on $S^2$ with vanishing mean, i.e.
\begin{align*}
  L_0^2(S^2) = \left\{u\in L^2(S^2) ~\middle|~ \int_{S^2}u\,d\mathcal{H}^2 = 0\right\} = \{u\in L^2(S^2) \mid (u,Y_0^0)_{L^2(S^2)}=0\}.
\end{align*}
Then $\Delta$ is invertible and self-adjoint as a linear operator
\begin{align*}
  \Delta\colon D_{L_0^2(S^2)}(\Delta) \subset L_0^2(S^2) \to L_0^2(S^2), \quad D_{L_0^2(S^2)}(\Delta) = L_0^2(S^2)\cap H^2(S^2).
\end{align*}
Also, the fractional Laplace--Beltrami operator $(-\Delta)^s$ with $s\in\mathbb{R}$ is defined by
\begin{align} \label{E:Def_Laps}
  (-\Delta)^su = \sum_{n=1}^\infty\sum_{m=-n}^n\lambda_n^s(u,Y_n^m)_{L^2(S^2)}Y_n^m, \quad u \in L_0^2(S^2).
\end{align}
Note that $(-\Delta)^su$ is real-valued if $u$ is so.
Moreover,
\begin{align} \label{E:Y_Perturb}
  \cos\theta\,\partial_\varphi(I+6\Delta^{-1})Y_n^m = im\left(1-\frac{6}{\lambda_n}\right)(a_n^mY_{n-1}^m+a_{n+1}^mY_{n+1}^m)
\end{align}
by \eqref{E:Y_Rec}, \eqref{E:Def_Laps}, and $\partial_\varphi Y_n^m=imY_n^m$.
We also have
\begin{align} \label{E:Lap_Half}
  \|(-\Delta)^{1/2}u\|_{L^2(S^2)} = \|\nabla u\|_{L^2(S^2)}, \quad u\in L_0^2(S^2)\cap H^1(S^2)
\end{align}
by a density argument and integration by parts.
Let us give auxiliary results.

\begin{lemma} \label{L:S2Int_Y1m}
  Let $\psi\in H^2(S^2)$ be a real-valued function and $\mathbf{u}=\mathbf{n}_{S^2}\times\nabla\psi$ on $S^2$.
  Then
  \begin{align} \label{E:S2Int_Y1m}
    (\mathbf{u}\cdot\nabla\Delta\psi,Y_1^m)_{L^2(S^2)} = 0, \quad |m|=0,1.
  \end{align}
\end{lemma}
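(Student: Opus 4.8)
The plan is to reduce the statement, via one integration by parts and the algebraic antisymmetry of the map $\psi\mapsto\mathbf{n}_{S^2}\times\nabla\psi$, to the Killing identity of Lemma \ref{L:Killing} applied with $f=g=\psi$. First, since $\overline{Y_1^m}=(-1)^mY_1^{-m}$ is again a spherical harmonic of degree one and we may treat real and imaginary parts separately, it suffices to prove $\int_{S^2}(\mathbf{u}\cdot\nabla\Delta\psi)\chi\,d\mathcal{H}^2=0$ for each real function $\chi$ in the span of $Y_1^0,Y_1^{\pm1}$, that is, for $\chi=\cos\theta,\sin\theta\cos\varphi,\sin\theta\sin\varphi$, the restrictions to $S^2$ of the Euclidean coordinate functions. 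For such a $\chi$ we have $\chi=\mathbf{a}\cdot x$ with a constant $\mathbf{a}\in\mathbb{R}^3$, and I set $X=\mathbf{n}_{S^2}\times\nabla\chi$. Because $\nabla\chi$ is the tangential projection of $\mathbf{a}$, one computes $X=\mathbf{n}_{S^2}\times\mathbf{a}=x\times\mathbf{a}$, so $X$ is of the form $\mathbf{b}\times x$ and hence a Killing vector field on $S^2$ (as recalled in Section \ref{SS:Sphere}), to which Lemma \ref{L:Killing} applies.

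Next I integrate by parts. Since $\mathbf{u}=\mathbf{n}_{S^2}\times\nabla\psi$ satisfies $\mathrm{div}\,\mathbf{u}=0$, we have $\mathbf{u}\cdot\nabla\Delta\psi=\mathrm{div}\bigl((\Delta\psi)\mathbf{u}\bigr)$, so integration by parts on the closed manifold $S^2$ gives
\begin{align*}
  \int_{S^2}(\mathbf{u}\cdot\nabla\Delta\psi)\chi\,d\mathcal{H}^2 = -\int_{S^2}(\Delta\psi)\,(\mathbf{u}\cdot\nabla\chi)\,d\mathcal{H}^2.
\end{align*}
Now I use the pointwise antisymmetry $(\mathbf{n}_{S^2}\times\nabla\psi)\cdot\nabla\chi=\mathbf{n}_{S^2}\cdot(\nabla\psi\times\nabla\chi)=-(\mathbf{n}_{S^2}\times\nabla\chi)\cdot\nabla\psi$, i.e. $\mathbf{u}\cdot\nabla\chi=-\langle X,\nabla\psi\rangle$, to rewrite the right-hand side as $\int_{S^2}(\Delta\psi)\langle X,\nabla\psi\rangle\,d\mathcal{H}^2$.

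Finally, I apply Lemma \ref{L:Killing} with $f=g=\psi\in H^2(S^2)$ and the Killing field $X$: the two summands in \eqref{E:Killing} coincide, so $2\int_{S^2}(\Delta\psi)\langle X,\nabla\psi\rangle\,d\mathcal{H}^2=0$, which yields the claim. The only delicate point is the meaning of the integration by parts: for $\psi\in H^2(S^2)$ the factor $\nabla\Delta\psi$ lies only in $H^{-1}$, so the left-hand side should be understood through the duality pairing (equivalently, through the right-hand side above, which is continuous in the $H^2$-norm). I would therefore first establish the identity for smooth $\psi$, where every manipulation is classical, and then pass to general $\psi\in H^2(S^2)$ by density. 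Beyond this regularity bookkeeping, the argument is a short and routine computation, so I expect no real obstacle.
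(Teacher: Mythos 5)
Your proposal is correct and follows essentially the same route as the paper: reduce to the real functions $\chi=\cos\theta,\sin\theta\cos\varphi,\sin\theta\sin\varphi$, integrate by parts using $\mathrm{div}\,\mathbf{u}=0$, flip the triple product to bring in the Killing field $X=\mathbf{n}_{S^2}\times\nabla\chi=-\mathbf{a}\times x$, and apply Lemma \ref{L:Killing} with $f=g=\psi$. The only differences are cosmetic (you identify $X$ via the tangential projection of $\mathbf{a}$ rather than by the explicit spherical-coordinate computation \eqref{Pf_SIY:n_gc}, and you add a density remark about the regularity of $\nabla\Delta\psi$).
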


\begin{proof}
  Since
  \begin{align*}
    Y_1^0 = C_1^0\cos\theta, \quad Y_1^{\pm1} = C_1^{\pm1}e^{\pm i\varphi}\sin\theta=C_1^{\pm1}(\sin\theta\cos\varphi\pm i\sin\theta\sin\varphi)
  \end{align*}
  with constants $C_1^0,C_1^{\pm1}\in\mathbb{R}$, it suffices to show that
  \begin{align*}
    (\mathbf{u}\cdot\nabla\Delta\psi,\chi)_{L^2(S^2)} = \int_{S^2}(\mathbf{u}\cdot\nabla\Delta\psi)\chi\,d\mathcal{H}^2 = 0, \quad \chi = \cos\theta,\sin\theta\cos\varphi,\sin\theta\sin\varphi.
  \end{align*}
  Noting that $\mathbf{u}=\mathbf{n}_{S^2}\times\nabla\psi$ is divergence free on $S^2$, we have
  \begin{align*}
    (\mathbf{u}\cdot\nabla\Delta\psi,\chi)_{L^2(S^2)} = -\int_{S^2}(\Delta\psi)(\mathbf{u}\cdot\nabla\chi)\,d\mathcal{H}^2 = \int_{S^2}(\Delta\psi)\{\nabla\psi\cdot(\mathbf{n}_{S^2}\times\nabla\chi)\}\,d\mathcal{H}^2
  \end{align*}
  by integration by parts and $(\mathbf{a}\times\mathbf{b})\cdot\mathbf{c}=-\mathbf{b}\cdot(\mathbf{a}\times\mathbf{c})$ for $\mathbf{a},\mathbf{b},\mathbf{c}\in\mathbb{R}^3$.
  Moreover, we observe by $\mathbf{n}_{S^2}(x)=x$ for $x\in S^2$, \eqref{E:Sphrical}, and \eqref{E:Grad} that
  \begin{align} \label{Pf_SIY:n_gc}
    X(x) = \mathbf{n}_{S^2}(x)\times\nabla \chi(x) =
    \begin{cases}
      -\mathbf{e}_3\times x &\text{if} \quad \chi = \cos\theta, \\
      -\mathbf{e}_1\times x &\text{if} \quad \chi = \sin\theta\cos\varphi, \\
      -\mathbf{e}_2\times x &\text{if} \quad \chi = \sin\theta\sin\varphi
    \end{cases}
  \end{align}
  for $x\in S^2$, where $\{\mathbf{e}_1,\mathbf{e}_2,\mathbf{e}_3\}$ is the standard basis of $\mathbb{R}^3$.
  Since this $X$ is Killing and $\psi$ is real-valued, we can apply \eqref{E:Killing} with $f=g=\psi$ to find that
  \begin{align*}
    (\mathbf{u}\cdot\nabla\Delta\psi,\chi)_{L^2(S^2)} = \int_{S^2}(\Delta\psi)(\nabla\psi\cdot X)\,d\mathcal{H}^2 = 0
  \end{align*}
  and thus \eqref{E:S2Int_Y1m} follows.
\end{proof}

\begin{lemma} \label{L:S2Int_Kil}
  For $\mathbf{a}\in\mathbb{R}^3$ let $X(x)=\mathbf{a}\times x$, $x\in S^2$.
  Then
  \begin{align} \label{E:S2Int_Kil}
    (X\cdot\nabla\Delta^{-1}\omega,\omega)_{L^2(S^2)} = (X\cdot\nabla\omega,\Delta^{-1}\omega)_{L^2(S^2)} = 0
  \end{align}
  for every real-valued function $\omega\in L_0^2(S^2)$.
\end{lemma}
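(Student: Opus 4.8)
The plan is to reduce both identities to the single cancellation supplied by Lemma~\ref{L:Killing}. Write $\phi=\Delta^{-1}\omega$, so that $\phi\in L_0^2(S^2)\cap H^2(S^2)$ and $\Delta\phi=\omega$; since $\omega$ is real-valued, so is $\phi$. The vector field $X(x)=\mathbf{a}\times x$ is tangential and Killing on $S^2$, as recorded in Section~\ref{SS:Sphere}, and therefore $\mathrm{div}\,X=0$. The strategy is: obtain the first equality by applying the Killing identity \eqref{E:Killing} to $\phi$, and then obtain the second equality from the first by divergence-free integration by parts.

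For the first equality I would apply Lemma~\ref{L:Killing} with $f=g=\phi\in H^2(S^2)$. Since the two summands in \eqref{E:Killing} then coincide, the lemma yields
\begin{align*}
  2\int_{S^2}(\Delta\phi)\langle\nabla\phi,X\rangle\,d\mathcal{H}^2 = 0,
\end{align*}
that is, $\int_{S^2}\omega\,(X\cdot\nabla\phi)\,d\mathcal{H}^2=0$ because $\Delta\phi=\omega$ and $\langle\nabla\phi,X\rangle=X\cdot\nabla\phi$. As $\omega$ is real-valued the complex conjugate in the $L^2(S^2)$-inner product is irrelevant, so this is precisely $(X\cdot\nabla\Delta^{-1}\omega,\omega)_{L^2(S^2)}=0$.

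For the second equality I would integrate by parts on the closed manifold $S^2$, using $\mathrm{div}\,X=0$: for $u,v\in H^1(S^2)$ one has $\int_{S^2}\mathrm{div}(uvX)\,d\mathcal{H}^2=0$, whence $\int_{S^2}v\,(X\cdot\nabla u)\,d\mathcal{H}^2=-\int_{S^2}u\,(X\cdot\nabla v)\,d\mathcal{H}^2$. Taking $u=\omega=\Delta\phi$ and $v=\phi$ gives
\begin{align*}
  (X\cdot\nabla\omega,\Delta^{-1}\omega)_{L^2(S^2)} = -(X\cdot\nabla\Delta^{-1}\omega,\omega)_{L^2(S^2)} = 0
\end{align*}
by the first part. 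The only point requiring care is that a general $\omega\in L_0^2(S^2)$ need not lie in $H^1(S^2)$, so $X\cdot\nabla\omega$ is a priori only a distribution; I would handle this either by reading the left-hand side through the above duality $(X\cdot\nabla\omega,\Delta^{-1}\omega)_{L^2(S^2)}:=-(\omega,X\cdot\nabla\Delta^{-1}\omega)_{L^2(S^2)}$, which is well defined since $\Delta^{-1}$ maps $L_0^2(S^2)$ continuously into $H^2(S^2)$ and $X\cdot\nabla$ maps $H^2(S^2)$ into $H^1(S^2)\subset L^2(S^2)$, or by a density argument approximating $\omega$ by finite spherical-harmonic sums and passing to the limit in this stable bilinear form. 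This regularity bookkeeping is the only mild obstacle; the essential content is the Killing cancellation of Lemma~\ref{L:Killing}, which is what makes $(\Delta\phi)\langle\nabla\phi,X\rangle$ integrate to zero and is genuinely special to fields of the form $\mathbf{a}\times x$ (a merely divergence-free $X$ would not suffice).
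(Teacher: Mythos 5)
Your proof is correct and follows essentially the same route as the paper: set $\phi=\Delta^{-1}\omega\in H^2(S^2)$, apply Lemma \ref{L:Killing} with $f=g=\phi$ to get the first identity, and then transfer the derivative via $\mathrm{div}\,X=0$ to get the second. Your additional remark on interpreting $(X\cdot\nabla\omega,\Delta^{-1}\omega)_{L^2(S^2)}$ by duality when $\omega\notin H^1(S^2)$ is a reasonable piece of bookkeeping that the paper passes over silently, but it does not change the argument.
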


\begin{proof}
  Let $\psi=\Delta^{-1}\omega\in H^2(S^2)$.
  Then since $\psi$ is real-valued and $X$ is a Killing vector field on $S^2$, it follows from \eqref{E:Killing} with $f=g=\psi$ that
  \begin{align*}
    (X\cdot\nabla\Delta^{-1}\omega,\omega)_{L^2(S^2)} = (X\cdot\nabla\psi,\Delta\psi)_{L^2(S^2)} = 0.
  \end{align*}
  Also, noting that $\mathrm{div}\,X=0$ on $S^2$ since $X$ is Killing, we have
  \begin{align*}
    (X\cdot\nabla\omega,\Delta^{-1}\omega)_{L^2(S^2)} = -(X\cdot\nabla\Delta^{-1}\omega,\omega)_{L^2(S^2)} = 0
  \end{align*}
  by integration by parts and the above equality,
  Hence \eqref{E:S2Int_Kil} is valid.
\end{proof}

\begin{lemma} \label{L:Y2m_Kil}
  For $\mathbf{a}=(a_1,a_2,a_3)^T\in\mathbb{R}^3$ let $X(x)=\mathbf{a}\times x$, $x\in S^2$.
  Then
  \begin{align} \label{E:Y2m_Kil}
    \begin{aligned}
      X\cdot\nabla Y_2^0 &= \frac{\sqrt{6}}{2}(ia_1+a_2)Y_2^1+\frac{\sqrt{6}}{2}(ia_1-a_2)Y_2^{-1}, \\
      X\cdot\nabla Y_2^1 &= (ia_1+a_2)Y_2^2+ia_3Y_2^1+\frac{\sqrt{6}}{2}(ia_1-a_2)Y_2^0, \\
      X\cdot\nabla Y_2^2 &= 2ia_3Y_2^2+(ia_1-a_2)Y_2^1.
    \end{aligned}
  \end{align}
  Moreover, since $Y_2^{-m}=(-1)^m\overline{Y_2^m}$ and $X$ is real,
  \begin{align} \label{E:Y2mK_minus}
    \begin{aligned}
      X\cdot\nabla Y_2^{-1} &= (ia_1-a_2)Y_2^{-2}-ia_3Y_2^{-1}+\frac{\sqrt{6}}{2}(ia_1+a_2)Y_2^0, \\
      X\cdot\nabla Y_2^{-2} &= -2ia_3Y_2^{-2}+(ia_1+a_2)Y_2^{-1}.
    \end{aligned}
  \end{align}
\end{lemma}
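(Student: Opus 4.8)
The plan is to exploit linearity in $\mathbf{a}$ together with the fact that $X(x)=\mathbf{a}\times x$ is an infinitesimal rotation. Writing $\mathbf{a}=a_1\mathbf{e}_1+a_2\mathbf{e}_2+a_3\mathbf{e}_3$ and using \eqref{E:Abu_Pr}, I would decompose
\[
X\cdot\nabla = a_1(\mathbf{e}_1\times x)\cdot\nabla+a_2(\mathbf{e}_2\times x)\cdot\nabla+a_3(\mathbf{e}_3\times x)\cdot\nabla,
\]
which reduces the lemma to computing the three basic directional derivatives on each of $Y_2^0,Y_2^1,Y_2^2$. The conceptual reason the right-hand sides of \eqref{E:Y2m_Kil} lie in $\mathrm{span}\{Y_2^{-2},\dots,Y_2^2\}$ is that each field $\mathbf{e}_j\times x$ generates a one-parameter group of rotations of $S^2$, under which the degree-two eigenspace of $-\Delta$ is invariant; so only the coefficients must be identified.

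First I would treat the axial field. Since $\mathbf{e}_3\times x=\partial_\varphi x$ by \eqref{E:Sphrical}, formula \eqref{E:Grad} with the orthogonality $\partial_\theta x\cdot\partial_\varphi x=0$ and $\partial_\varphi x\cdot\partial_\varphi x=\sin^2\theta$ gives $(\mathbf{e}_3\times x)\cdot\nabla=\partial_\varphi$, so $(\mathbf{e}_3\times x)\cdot\nabla Y_2^m=imY_2^m$, which produces the diagonal terms $ia_3Y_2^m$. For $j=1,2$ I would expand the fields $\mathbf{e}_j\times x$ computed in \eqref{Pf_SIY:n_gc} in the orthogonal tangent frame $\{\partial_\theta x,\partial_\varphi x\}$ and apply \eqref{E:Grad}, obtaining the first-order operators
\[
(\mathbf{e}_1\times x)\cdot\nabla=-\sin\varphi\,\partial_\theta-\cot\theta\cos\varphi\,\partial_\varphi,\qquad
(\mathbf{e}_2\times x)\cdot\nabla=\cos\varphi\,\partial_\theta-\cot\theta\sin\varphi\,\partial_\varphi.
\]

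Next I would substitute the explicit profiles $Y_2^0\propto 3\cos^2\theta-1$, $Y_2^{\pm1}\propto\sin\theta\cos\theta\,e^{\pm i\varphi}$, $Y_2^{\pm2}\propto\sin^2\theta\,e^{\pm2i\varphi}$ and carry out the differentiations. It is convenient to group the $j=1,2$ contributions as
\[
a_1(\mathbf{e}_1\times x)\cdot\nabla+a_2(\mathbf{e}_2\times x)\cdot\nabla=(a_2\cos\varphi-a_1\sin\varphi)\,\partial_\theta-\cot\theta\,(a_1\cos\varphi+a_2\sin\varphi)\,\partial_\varphi;
\]
the $\cos\varphi,\sin\varphi$ factors recombine with the $e^{\pm im\varphi}$ of $Y_2^m$ into $e^{\pm i\varphi}$, shifting the longitudinal index by $\pm1$ and naturally organizing the output into the combinations $ia_1+a_2$ (raising $m$) and $ia_1-a_2$ (lowering $m$) that appear in \eqref{E:Y2m_Kil}, while the $\theta$-parts collapse onto the neighbouring Legendre profiles after elementary trigonometric simplifications. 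Matching coefficients against $Y_2^{-2},\dots,Y_2^2$ yields the three identities in \eqref{E:Y2m_Kil}.

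Finally, \eqref{E:Y2mK_minus} requires no further computation: because $\mathbf{a}\in\mathbb{R}^3$, the operator $X\cdot\nabla$ commutes with complex conjugation, so $Y_2^{-m}=(-1)^m\overline{Y_2^m}$ gives $X\cdot\nabla Y_2^{-m}=(-1)^m\overline{X\cdot\nabla Y_2^m}$; conjugating the already-established right-hand sides of \eqref{E:Y2m_Kil} and reusing $\overline{Y_2^k}=(-1)^kY_2^{-k}$ reproduces \eqref{E:Y2mK_minus}. The main obstacle is not conceptual but bookkeeping: one must track the normalization constants $\sqrt{\tfrac{2n+1}{4\pi}\tfrac{(n-m)!}{(n+m)!}}$ faithfully across the index shifts $m\mapsto m\pm1$ and verify that the $\theta$-derivatives of the explicit profiles recombine into exactly the coefficients $\tfrac{\sqrt6}{2}$, $1$, and $2$ stated in \eqref{E:Y2m_Kil}. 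This is precisely what makes the argument elementary but slightly long.
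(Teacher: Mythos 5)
Your proposal is correct and follows essentially the same route as the paper: decompose $X\cdot\nabla$ by linearity in $\mathbf{a}$, express the three fields $\mathbf{e}_j\times x$ in the frame $\{\partial_\theta x,\partial_\varphi x\}$ via \eqref{E:Grad} (your first-order operators agree with the pairings in \eqref{E:Inn_KB}), substitute the explicit profiles of $Y_2^0,Y_2^{\pm1},Y_2^{\pm2}$, recombine $\cos\varphi,\sin\varphi$ with $e^{im\varphi}$ into the raising/lowering coefficients $ia_1\pm a_2$, and obtain \eqref{E:Y2mK_minus} by conjugation. The only difference is presentational (operators acting on $Y_2^m$ versus dotting $\mathbf{e}_k\times x$ with $\nabla Y_2^m$), plus your added, correct observation that rotation invariance of the degree-two eigenspace explains a priori why the result stays in $\mathrm{span}\{Y_2^{-2},\dots,Y_2^2\}$.
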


The equalities \eqref{E:Y2m_Kil} are shown by elementary calculations under spherical coordinates.
To avoid making this section too long, we give the proof of \eqref{E:Y2m_Kil} in Section \ref{S:PoYK}.

\section{Proof of Theorem \ref{T:NLS_Main}} \label{S:NLS}
The purpose of this section is to establish Theorem \ref{T:NLS_Main}.

First we note that the global existence and uniqueness of a weak solution to \eqref{E:NL_Pert} are proved in the same way as in the case of the Navier--Stokes equations in flat 2D domains (see e.g. \cite{Tem01,BoyFab13}).
We can show the global existence of a weak solution by a standard Galerkin method with basis functions $Y_n^m$.
Also, since $S^2$ is 2D, we can get the uniqueness of a weak solution by estimating the difference of two weak solutions with the aid of the weak form \eqref{E:Weak_Form} and Ladyzhenskaya's inequality
\begin{align*}
  \|\omega\|_{L^4(S^2)} \leq \|\omega\|_{L^2(S^2)}^{1/2}\|\omega\|_{H^1(S^2)}^{1/2}, \quad \omega\in H^1(S^2),
\end{align*}
which follows from the same inequality on $\mathbb{R}^2$ (see \cite{Lad69}) and a localization argument with a partition of unity, and then by using Gronwall's inequality.
Here we omit details and just give a remark: approximate solutions constructed by the Galerkin method can grow exponentially in time due to the perturbation operator in \eqref{E:NL_Pert}, but it does not matter since we take a limit of the approximate solutions on finite time intervals, e.g. on $[0,n]$, $n\in\mathbb{N}$ to get weak solutions $\omega_n$ on $[0,n]$ and then we use the uniqueness of a weak solution to define a global weak solution $\omega$ by $\omega=\omega_n$ on $[0,n]$ for each $n\in\mathbb{N}$.
We also refer to \cite{Ski17} for the proof in the case of a modified vorticity equation on $S^2$.

Now let $\omega$ be the unique global weak solution to \eqref{E:NL_Pert} with initial data $\omega_0\in L_0^2(S^2)$.
Here we assume that $\omega_0$ is real-valued and thus $\omega(t)$ is so.
We write
\begin{align*}
  \omega_n^m(t) = (\omega(t),Y_n^m)_{L^2(S^2)}, \quad \omega_{=n}(t) = \sum_{m=-n}^n\omega_n^m(t)Y_n^m, \quad \omega_{\geq N}(t) = \sum_{n\geq N}\omega_{=n}(t)
\end{align*}
and similarly for $\omega_0$ so that
\begin{align*}
  \omega(t) = \sum_{n=1}^\infty\sum_{m=-n}^n\omega_n^m(t)Y_n^m, \quad \omega_0 = \sum_{n=1}^\infty\sum_{m=-n}^n\omega_{0,n}^mY_n^m.
\end{align*}
Then, since $\omega(t)$ and $\omega_0$ are real-valued and $Y_n^{-m}=(-1)^m\overline{Y_n^m}$,
\begin{align} \label{E:Om_Conj}
  \omega_n^{-m}(t) = (-1)^m\overline{\omega_n^m(t)}, \quad \omega_{0,n}^{-m} = (-1)^m\overline{\omega_{0,n}^m}
\end{align}
and thus $\omega_{=n}(t)$, $\omega_{\geq N}(t)$, $\omega_{0,=n}$, and $\omega_{0,\geq N}$ are real-valued.

Let us show \eqref{E:NP_1and3} and \eqref{E:NP_2}.
The proof consists of six steps.
In what follows, for the sake of simplicity, we say that we take the inner product of an equation like \eqref{E:NL_Pert} with a test function instead of saying that we use a corresponding weak form like \eqref{E:Weak_Form}.
Also, we frequently use the following fact without mention: a vector field of the form $\mathbf{v}=\mathbf{n}_{S^2}\times\nabla f$ with a real-valued function $f$ on $S^2$ satisfies $\mathrm{div}\,\mathbf{v}=0$ on $S^2$.

\textbf{Step 1:} $\omega_{=1}(t)=\omega_{0,=1}$.
For $|m|=0,1$, we have
\begin{align*}
  (\cos\theta\,\partial_\varphi(I+6\Delta^{-1})\omega(t),Y_1^m)_{L^2(S^2)} = 0
\end{align*}
by \eqref{E:Y_Perturb} and $\lambda_2=6$.
Moreover, since $\lambda_1=2$,
\begin{align*}
  (\Delta\omega(t)+2\omega(t),Y_1^m)_{L^2(S^2)} = (-\lambda_1+2)\omega_1^m(t) = 0.
\end{align*}
Also, for $\psi(t)=\Delta^{-1}\omega(t)$ we have $\mathbf{u}(t) = \mathbf{n}_{S^2}\times\nabla\Delta^{-1}\omega(t) = \mathbf{n}_{S^2}\times\nabla\psi(t)$ and thus
\begin{align*}
  (\mathbf{u}(t)\cdot\nabla\omega(t),Y_1^m)_{L^2(S^2)} = (\mathbf{u}(t)\cdot\nabla\Delta\psi(t),Y_1^m)_{L^2(S^2)} = 0
\end{align*}
by \eqref{E:S2Int_Y1m}.
Hence we take the inner product of \eqref{E:NL_Pert} with $Y_1^m$ to get
\begin{align*}
  \frac{d}{dt}\omega_1^m(t) = \langle\partial_t\omega(t),Y_1^m\rangle_{H_0^1} = 0
\end{align*}
and thus $\omega_1^m(t)=\omega_{0,1}^m$ for $|m|=0,1$, i.e. $\omega_{=1}(t)=\omega_{0,=1}$.

\textbf{Step 2:} derivation of an equation for $\omega_{\geq2}(t)=\omega(t)-\omega_{=1}(t)$.
Let
\begin{align} \label{E:Def_ug2X}
  \begin{aligned}
    X &= \mathbf{n}_{S^2}\times\nabla\Delta^{-1}\omega_{0,=1} = -\frac{1}{2}\mathbf{n}_{S^2}\times\nabla\omega_{0,=1}, \\
    \mathbf{u}_{\geq2}(t) &= \mathbf{n}_{S^2}\times\nabla\Delta^{-1}\omega_{\geq 2}(t).
  \end{aligned}
\end{align}
Here we used $-\Delta\omega_{0,=1}=\lambda_1\omega_{0,=1}=2\omega_{0,=1}$.
Then since $\omega_{=1}(t)=\omega_{0,=1}$,
\begin{align*}
  \omega(t) = \omega_{0,=1}+\omega_{\geq2}(t), \quad \mathbf{u}(t) = X+\mathbf{u}_{\geq 2}(t).
\end{align*}
We substitute these expressions for \eqref{E:NL_Pert}.
Then $\partial_t\omega_{0,=1}=0$ and
\begin{align*}
  \Delta\omega_{0,=1}+2\omega_{0,=1} &= (-\lambda_1+2)\omega_{0,=1} = 0, \\
  \cos\theta\,\partial_\varphi(I+6\Delta^{-1})\omega_{0,=1} &= -\frac{2i}{\sqrt{5}}\,\omega_{0,1}^1Y_2^1+\frac{2i}{\sqrt{5}}\,\omega_{0,1}^{-1}Y_2^{-1}
\end{align*}
by $\lambda_1=2$ and \eqref{E:Y_Perturb}.
Also, since $X\cdot\nabla\omega_{0,=1}=0$ by the expression \eqref{E:Def_ug2X} of $X$,
\begin{align*}
  \mathbf{u}(t)\cdot\nabla\omega(t) &= X\cdot\nabla\omega_{\geq2}(t)+\mathbf{u}_{\geq 2}(t)\cdot\nabla\omega_{0,=1}+\mathbf{u}_{\geq2}(t)\cdot\nabla\omega_{\geq2}(t) \\
  &= X\cdot\nabla(I+2\Delta^{-1})\omega_{\geq 2}(t)+\mathbf{u}_{\geq2}(t)\cdot\nabla\omega_{\geq2}(t),
\end{align*}
where we also used
\begin{align*}
  \mathbf{u}_{\geq 2}(t)\cdot\nabla\omega_{0,=1} &= \{\mathbf{n}_{S^2}\times\nabla\Delta^{-1}\omega_{\geq2}(t)\}\cdot\nabla\omega_{0,=1} \\
  &= -\nabla\Delta^{-1}\omega_{\geq 2}(t)\cdot(\mathbf{n}_{S^2}\times\nabla\omega_{0,=1}) \\
  &= 2\nabla\Delta^{-1}\omega_{\geq 2}(t)\cdot X.
\end{align*}
Hence we obtain
\begin{align} \label{E:Omge2_Eq}
  \begin{aligned}
    \partial_t\omega_{\geq2}(t) &= \nu\{\Delta\omega_{\geq 2}(t)+2\omega_{\geq2}(t)\}-\frac{a}{4}\sqrt{\frac{5}{\pi}}\cos\theta\,\partial_\varphi(I+6\Delta^{-1})\omega_{\geq2}(t) \\
    &\qquad -X\cdot\nabla(I+2\Delta^{-1})\omega_{\geq2}(t)-\mathbf{u}_{\geq2}(t)\cdot\nabla\omega_{\geq 2}(t) \\
    &\qquad +\frac{ia}{2\sqrt{\pi}}\,\omega_{0,1}^1Y_2^1-\frac{ia}{2\sqrt{\pi}}\,\omega_{0,1}^{-1}Y_2^{-1},
  \end{aligned}
\end{align}
where the last two terms are stationary source terms due to the effect of the nondissipative part $\omega_{0,=1}$ of the solution $\omega(t)$ through the perturbation operator.

\textbf{Step 3:} expression of $X$.
By \eqref{E:Om_Conj}, we have
\begin{align*}
  \omega_{0,1}^0\in\mathbb{R}, \quad \omega_{0,1}^1 = \mathrm{Re}[\omega_{0,1}^1]+i\,\mathrm{Im}[\omega_{0,1}^1], \quad \omega_{0,1}^{-1} = -\mathrm{Re}[\omega_{0,1}^1]+i\,\mathrm{Im}[\omega_{0,1}^1].
\end{align*}
Then since
\begin{align*}
  Y_1^0 = \frac{1}{2}\sqrt{\frac{3}{\pi}}\cos\theta, \quad Y_1^{\pm1} = \mp\frac{1}{2}\sqrt{\frac{3}{2\pi}}\sin\theta(\cos\varphi\pm i\sin\varphi),
\end{align*}
we can write $\omega_{0,=1}=\sum_{m=-1}^1\omega_{0,1}^mY_1^m$ as
\begin{align*}
  \omega_{0,=1} = \frac{\omega_{0,1}^0}{2}\sqrt{\frac{3}{\pi}}\cos\theta-\mathrm{Re}[\omega_{0,1}^1]\sqrt{\frac{3}{2\pi}}\sin\theta\cos\varphi+\mathrm{Im}[\omega_{0,1}^1]\sqrt{\frac{3}{2\pi}}\sin\theta\sin\varphi.
\end{align*}
By this equality and \eqref{Pf_SIY:n_gc}, we find that
\begin{align} \label{E:X01_Form}
  X(x) = -\frac{1}{2}\mathbf{n}_{S^2}(x)\times\nabla\omega_{0,=1}(x) = \mathbf{a}\times x, \quad x\in S^2,
\end{align}
where $\mathbf{a}=(a_1,a_2,a_3)\in\mathbb{R}^3$ is given by
\begin{align} \label{E:X01_a}
  a_1 = -\frac{1}{2}\sqrt{\frac{3}{2\pi}}\,\mathrm{Re}[\omega_{0,1}^1], \quad a_2 = \frac{1}{2}\sqrt{\frac{3}{2\pi}}\,\mathrm{Im}[\omega_{0,1}^1], \quad a_3 = \frac{1}{4}\sqrt{\frac{3}{\pi}}\,\omega_{0,1}^0.
\end{align}
In particular, $X$ is a Killing vector field on $S^2$.

\textbf{Step 4:} estimate for $\omega_{\geq3}(t)$.
We take the inner product of \eqref{E:Omge2_Eq} with
\begin{align*}
  (I+6\Delta^{-1})\omega_{\geq2}(t) = \sum_{n\geq 2}\sum_{m=-n}^m\left(1-\frac{6}{\lambda_n}\right)\omega_n^m(t)Y_n^m.
\end{align*}
Then since the right-hand side is in fact the summation for $n\geq 3$ by $\lambda_2=6$,
\begin{align*}
  \bigl(Y_2^m,(I+6\Delta^{-1})\omega_{\geq2}(t)\bigr)_{L^2(S^2)} = 0, \quad m=\pm1.
\end{align*}
Also, it follows from $\lambda_2=6$ and $\lambda_n\geq12$ for $n\geq 3$ that
\begin{align*}
  \langle\partial_t\omega_{\geq2}(t),(I+6\Delta^{-1})\omega_{\geq 2}(t)\rangle_{H_0^1} &\geq \frac{1}{4}\frac{d}{dt}\|\omega_{\geq3}(t)\|_{L^2(S^2)}^2, \\
  \bigl(\Delta\omega_{\geq2}(t)+2\omega_{\geq2}(t),(I+6\Delta^{-1})\omega_{\geq2}(t)\bigr)_{L^2(S^2)} &\leq -5\|\omega_{\geq3}(t)\|_{L^2(S^2)}^2.
\end{align*}
Noting that $(I+6\Delta^{-1})\omega_{\geq2}(t)$ is real-valued and $\cos\theta$ is independent of $\varphi$, we carry out integration by parts with respect to $\varphi$ to find that
\begin{align*}
  \bigl(\cos\theta\,\partial_\varphi(I+6\Delta^{-1})\omega_{\geq2}(t),(I+6\Delta^{-1})\omega_{\geq2}(t)\bigr)_{L^2(S^2)} = 0.
\end{align*}
Since $\mathbf{u}_{\geq2}(t)\cdot\nabla\Delta^{-1}\omega_{\geq2}(t)=0$ by the definition \eqref{E:Def_ug2X} of $\mathbf{u}_{\geq2}(t)$, we have
\begin{align*}
  &\bigl(\mathbf{u}_{\geq 2}(t)\cdot\nabla\omega_{\geq2}(t),(I+6\Delta^{-1})\omega_{\geq2}(t)\bigr)_{L^2(S^2)} \\
  &\qquad = \bigl(\mathbf{u}_{\geq2}\cdot\nabla(I+6\Delta^{-1})\omega_{\geq2}(t),(I+6\Delta^{-1})\omega_{\geq2}(t)\bigr)_{L^2(S^2)} = 0
\end{align*}
by integration by parts and $\mathrm{div}\,\mathbf{u}_{\geq2}(t)=0$.
Also, since $\omega_{\geq2}(t)$ is real-valued and $X$ is of the form \eqref{E:X01_Form}, we can apply \eqref{E:S2Int_Kil} to get
\begin{align*}
  \bigl(X\cdot\nabla\Delta^{-1}\omega_{\geq2}(t),\omega_{\geq2}(t)\bigr)_{L^2(S^2)} = \bigl(X\cdot\nabla\omega_{\geq2}(t),\Delta^{-1}\omega_{\geq2}(t)\bigr)_{L^2(S^2)} = 0.
\end{align*}
We further observe that
\begin{align*}
  \bigl(X\cdot\nabla\omega_{\geq2}(t),\omega_{\geq2}(t)\bigr)_{L^2(S^2)} = \bigl(X\cdot\nabla\Delta^{-1}\omega_{\geq2}(t),\Delta^{-1}\omega_{\geq2}(t)\bigr)_{L^2(S^2)} = 0
\end{align*}
by integration by parts and $\mathrm{div}\,X=0$.
Hence
\begin{align*}
  \bigl(X\cdot\nabla(I+2\Delta^{-1})\omega_{\geq2}(t),(I+6\Delta^{-1})\omega_{\geq2}(t)\bigr)_{L^2(S^2)} = 0.
\end{align*}
Now we apply the above relations to the inner product of \eqref{E:Omge2_Eq} with $(I+6\Delta^{-1})\omega_{\geq2}(t)$.
Then we find that
\begin{align*}
  \frac{1}{4}\frac{d}{dt}\|\omega_{\geq3}(t)\|_{L^2(S^2)}^2 \leq -5\nu\|\omega_{\geq3}(t)\|_{L^2(S^2)}^2
\end{align*}
and thus the estimate \eqref{E:NP_1and3} for $\omega_{\geq3}(t)$ follows (note that $\omega_{\geq3}(0)=\omega_{0,\geq3}$).

\textbf{Step 5:} derivation of ODEs for $\omega_2^m(t)$, $|m|=0,1,2$.
We set
\begin{align} \label{E:Def_u2ge3}
  \begin{aligned}
    \mathbf{u}_{=2}(t) &= \mathbf{n}_{S^2}\times\nabla\Delta^{-1}\omega_{=2}(t) = -\frac{1}{6}\mathbf{n}_{S^2}\times\nabla\omega_{=2}(t), \\
    \mathbf{u}_{\geq3}(t) &= \mathbf{n}_{S^2}\times\nabla\Delta^{-1}\omega_{\geq3}(t).
  \end{aligned}
\end{align}
Here we used $-\Delta\omega_{=2}(t)=\lambda_2\omega_{=2}(t)=6\omega_{=2}(t)$.
We substitute
\begin{align*}
  \omega_{\geq2}(t) = \omega_{=2}(t)+\omega_{\geq3}(t), \quad \mathbf{u}_{\geq2}(t) = \mathbf{u}_{=2}(t)+\mathbf{u}_{\geq3}(t)
\end{align*}
for the right-hand side of \eqref{E:Omge2_Eq}.
Then since $-\Delta\omega_{=2}(t)=6\omega_{=2}(t)$,
\begin{align*}
  \Delta\omega_{\geq2}(t)+2\omega_{\geq2}(t) &= -4\omega_{=2}(t)+\{\Delta\omega_{\geq3}(t)+2\omega_{\geq3}(t)\}, \\
  (I+6\Delta^{-1})\omega_{\geq2}(t) &= (I+6\Delta^{-1})\omega_{\geq3}(t), \\
  X\cdot\nabla(I+2\Delta^{-1})\omega_{\geq2}(t) &= \frac{2}{3}X\cdot\nabla\omega_{=2}(t)+X\cdot\nabla(I+2\Delta^{-1})\omega_{\geq3}(t).
\end{align*}
Also, since $\mathbf{u}_{=2}(t)\cdot\nabla\omega_{=2}(t)=0$ by the expression \eqref{E:Def_u2ge3} of $\mathbf{u}_{=2}(t)$,
\begin{align*}
  \mathbf{u}_{\geq2}(t)\cdot\nabla\omega_{\geq2}(t) &= \mathbf{u}_{=2}(t)\cdot\nabla\omega_{\geq3}(t)+\mathbf{u}_{\geq3}(t)\cdot\nabla\omega_{=2}(t)+\mathbf{u}_{\geq3}(t)\cdot\nabla\omega_{\geq3}(t) \\
  &= \mathbf{u}_{=2}(t)\cdot\nabla(I+6\Delta^{-1})\omega_{\geq3}(t)+\mathbf{u}_{\geq3}(t)\cdot\nabla\omega_{\geq3}(t),
\end{align*}
where we also used
\begin{align*}
  \mathbf{u}_{\geq3}(t)\cdot\nabla\omega_{=2}(t) &= \{\mathbf{n}_{S^2}\times\nabla\Delta^{-1}\omega_{\geq3}(t)\}\cdot\nabla\omega_{=2}(t) \\
  &= -\nabla\Delta^{-1}\omega_{\geq3}(t)\cdot\{\mathbf{n}_{S^2}\times\nabla\omega_{=2}(t)\} \\
  &= 6\nabla\Delta^{-1}\omega_{\geq3}(t)\cdot\mathbf{u}_{=2}(t).
\end{align*}
Hence we have
\begin{align} \label{E:Om_2all_Eq}
  \begin{aligned}
    \partial_t\omega_{\geq2}(t) &= -4\nu\omega_{=2}(t)-\frac{2}{3}X\cdot\nabla\omega_{=2}(t)-\mathbf{u}_{=2}(t)\cdot\nabla(I+6\Delta^{-1})\omega_{\geq3}(t) \\
    &\qquad +\frac{ia}{2\sqrt{\pi}}\,\omega_{0,1}^1Y_2^1-\frac{ia}{2\sqrt{\pi}}\,\omega_{0,1}^{-1}Y_2^{-1}+f_{\geq3}(t),
  \end{aligned}
\end{align}
where
\begin{align} \label{E:Def_fge3}
  \begin{aligned}
    f_{\geq3}(t) &= \nu\{\Delta\omega_{\geq3}(t)+2\omega_{\geq3}(t)\}-\frac{a}{4}\sqrt{\frac{5}{\pi}}\cos\theta\,\partial_\varphi(I+6\Delta^{-1})\omega_{\geq3}(t) \\
    &\qquad -X\cdot\nabla(I+2\Delta^{-1})\omega_{\geq3}(t)-\mathbf{u}_{\geq3}(t)\cdot\nabla\omega_{\geq3}(t).
  \end{aligned}
\end{align}
Noe we take the inner product of \eqref{E:Om_2all_Eq} with $Y_2^m$, $|m|=0,1,2$.
Then
\begin{align} \label{E:O2m_Inner}
  \begin{aligned}
    \frac{d}{dt}\omega_2^m(t) &= -4\nu\omega_2^m(t)-\frac{2}{3}(X\cdot\nabla\omega_{=2}(t),Y_2^m)_{L^2(S^2)} \\
    &\qquad -(\mathbf{u}_{=2}(t)\cdot\nabla(I+6\Delta^{-1})\omega_{\geq3}(t),Y_2^m)_{L^2(S^2)} \\
    &\qquad +\frac{ia}{2\sqrt{\pi}}\,\omega_{0,1}^1\delta_{1,m}-\frac{ia}{2\sqrt{\pi}}\,\omega_{0,1}^{-1}\delta_{-1,m}+(f_{\geq3}(t),Y_2^m)_{L^2(S^2)},
  \end{aligned}
\end{align}
where $\delta_{j,k}$ is the Kronecker delta.
Let us calculate the above inner products.
We observe by integration by parts and $\mathrm{div}\,\mathbf{u}_{=2}(t)=0$ that
\begin{align*}
  (\mathbf{u}_{=2}(t)\cdot\nabla(I+6\Delta^{-1})\omega_{\geq3}(t),Y_2^m)_{L^2(S^2)} &= \int_{S^2}\{\mathbf{u}_{=2}(t)\cdot\nabla(I+6\Delta^{-1})\omega_{\geq3}(t)\}\overline{Y_2^m}\,d\mathcal{H}^2 \\
  &= -\int_{S^2}(I+6\Delta^{-1})\omega_{\geq3}(t)\Bigl(\mathbf{u}_{=2}(t)\cdot\nabla\overline{Y_2^m}\Bigr)\,d\mathcal{H}^2.
\end{align*}
Recall that here we use the complex spherical harmonics and abuse the notations of the inner and vector products in $\mathbb{R}^3$ (see \eqref{E:Abu_Pr}).
We write this expression as
\begin{align} \label{E:U2_Og3_Y2m}
  \begin{gathered}
    (\mathbf{u}_{=2}(t)\cdot\nabla(I+6\Delta^{-1})\omega_{\geq3}(t),Y_2^m)_{L^2(S^2)} = \sum_{k=-2}^2M_{m,k}(t)\omega_2^k(t), \\
    M_{m,k}(t) = \frac{1}{6}\int_{S^2}(I+6\Delta^{-1})\omega_{\geq3}(t)\Bigl\{(\mathbf{n}_{S^2}\times\nabla Y_2^k)\cdot\nabla\overline{Y_2^m}\Bigr\}\,d\mathcal{H}^2
  \end{gathered}
\end{align}
by using \eqref{E:Def_u2ge3} and $\omega_{=2}(t)=\sum_{m=-2}^2\omega_2^m(t)Y_2^m$.
For the inner product of $f_{\geq3}(t)$ given by \eqref{E:Def_fge3} with $Y_2^m$, we see that
\begin{align*}
  (\Delta\omega_{\geq3}(t)+2\omega_{\geq3}(t),Y_2^m)_{L^2(S^2)} &= 0, \\
  (\cos\theta\,\partial_\varphi(I+6\Delta^{-1})\omega_{\geq3}(t),Y_2^m)_{L^2(S^2)} &= \frac{1}{2}ima_3^m\omega_3^m(t)
\end{align*}
by $\omega_{\geq3}(t)=\sum_{n\geq3}\sum_{m=-n}^n\omega_n^m(t)Y_n^m$, \eqref{E:Y_Perturb}, and $\lambda_3=12$.
Also,
\begin{align*}
  (\mathbf{u}_{\geq3}(t)\cdot\nabla\omega_{\geq3}(t),Y_2^m)_{L^2(S^2)} &= \int_{S^2}\{\mathbf{u}_{\geq3}(t)\cdot\nabla\omega_{\geq3}(t)\}\overline{Y_2^m}\,d\mathcal{H}^2 \\
  &= -\int_{S^2}\omega_{\geq3}(t)\Bigl(\mathbf{u}_{\geq3}(t)\cdot\nabla\overline{Y_2^m}\Bigr)\,d\mathcal{H}^2
\end{align*}
by integration by parts and $\mathrm{div}\,\mathbf{u}_{\geq3}(t)=0$.
We also have
\begin{align*}
  (X\cdot\nabla(I+2\Delta^{-1})\omega_{\geq3}(t),Y_2^m)_{L^2(S^2)} &= \int_{S^2}\{X\cdot\nabla(I+2\Delta^{-1})\omega_{\geq3}(t)\}\overline{Y_2^m}\,d\mathcal{H^2} \\
  &= -\int_{S^2}(I+2\Delta^{-1})\omega_{\geq3}(t)\Bigl(X\cdot\nabla\overline{Y_2^m}\Bigr)\,d\mathcal{H}^2 \\
  &= -\bigl((I+2\Delta^{-1})\omega_{\geq3}(t),X\cdot\nabla Y_2^m\bigr)_{L^2(S^2)}
\end{align*}
by integration by parts, $\mathrm{div}\,X=0$, and $X\cdot\nabla\overline{Y_2^m}=\overline{X\cdot\nabla Y_2^m}$ since $X$ is real.
Then since $X$ is of the form \eqref{E:X01_Form}, we see by \eqref{E:Y2m_Kil} and \eqref{E:Y2mK_minus} that
\begin{align*}
  X\cdot\nabla Y_2^m\in\mathrm{span}\{Y_2^0, Y_2^{\pm1}, Y_2^{\pm2}\}, \quad |m| = 0,1,2.
\end{align*}
By this fact, $\omega_{\geq3}(t)=\sum_{n\geq3}\sum_{m=-n}^n\omega_n^m(t)Y_n^m$, and \eqref{E:Def_Laps}, we get
\begin{align*}
  (X\cdot\nabla(I+2\Delta^{-1})\omega_{\geq3}(t),Y_2^m)_{L^2(S^2)} = -\bigl((I+2\Delta^{-1})\omega_{\geq3}(t),X\cdot\nabla Y_2^m\bigr)_{L^2(S^2)} = 0.
\end{align*}
Thus, noting that $f_{\geq3}(t)$ is given by \eqref{E:Def_fge3}, we see by the above equalities that
\begin{align} \label{E:fg3_Y2m}
  (f_{\geq3}(t),Y_2^m)_{L^2(S^2)} = -\frac{a}{8}\sqrt{\frac{5}{\pi}}\,ima_3^m\omega_3^m(t)+\int_{S^2}\omega_{\geq3}(t)\Bigl(\mathbf{u}_{\geq3}(t)\cdot\nabla\overline{Y_2^m}\Bigr)\,d\mathcal{H}^2.
\end{align}
To compute the inner product of $X\cdot\nabla\omega_{=2}(t)$ with $Y_2^m$, we see that we can use \eqref{E:Y2m_Kil} and \eqref{E:Y2mK_minus} since $X$ is of the form \eqref{E:X01_Form}.
Hence
\begin{align*}
  X\cdot\nabla\omega_{=2}(t) = \sum_{m=-2}^m\omega_2^m(t)(X\cdot\nabla Y_2^m) = \sum_{m=-2}^2\eta_2^m(t)Y_2^m,
\end{align*}
where $\eta_2^m(t)=(X\cdot\nabla\omega_{=2}(t),Y_2^m)_{L^2(S^2)}$ is given by
\begin{align*}
  \eta_2^2(t) &= 2ia_3\omega_2^2(t)+(ia_1+a_2)\omega_2^1(t), \\
  \eta_2^1(t) &= (ia_1-a_2)\omega_2^2(t)+ia_3\omega_2^1(t)+\frac{\sqrt{6}}{2}(ia_1+a_2)\omega_2^0(t), \\
  \eta_2^0(t) &= \frac{\sqrt{6}}{2}(ia_1-a_2)\omega_2^1(t)+\frac{\sqrt{6}}{2}(ia_1+a_2)\omega_2^{-1}(t), \\
  \eta_2^{-1}(t) &= \frac{\sqrt{6}}{2}(ia_1-a_2)\omega_2^0(t)-ia_3\omega_2^{-1}(t)+(ia_1+a_2)\omega_2^{-2}(t), \\
  \eta_2^{-2}(t) &= (ia_1-a_2)\omega_2^{-1}(t)-2ia_3\omega_2^{-2}(t).
\end{align*}
Moreover, since
\begin{align*}
  ia_1+a_2 = -\frac{i}{2}\sqrt{\frac{3}{2\pi}}\,\omega_{0,1}^1, \quad ia_1-a_2 = -\frac{i}{2}\sqrt{\frac{3}{2\pi}}\,\overline{\omega_{0,1}^1}
\end{align*}
by \eqref{E:X01_a}, we have
\begin{align*}
  \eta_2^2(t) &= \frac{i}{2}\sqrt{\frac{3}{\pi}}\,\omega_{0,1}^0\,\omega_2^2(t)-\frac{i}{2}\sqrt{\frac{3}{2\pi}}\,\omega_{0,1}^1\,\omega_2^1(t), \\
  \eta_2^1(t) &= -\frac{i}{2}\sqrt{\frac{3}{2\pi}}\,\overline{\omega_{0,1}^1}\,\omega_2^2(t)+\frac{i}{4}\sqrt{\frac{3}{\pi}}\,\omega_{0,1}^0\,\omega_2^1(t)-\frac{3i}{4\sqrt{\pi}}\,\omega_{0,1}^1\,\omega_2^0(t), \\
  \eta_2^0(t) &= -\frac{3i}{4\sqrt{\pi}}\,\overline{\omega_{0,1}^1}\,\omega_2^1(t)-\frac{3i}{4\sqrt{\pi}}\,\omega_{0,1}^1\,\omega_2^{-1}(t), \\
  \eta_2^{-1}(t) &= -\frac{3i}{4\sqrt{\pi}}\,\overline{\omega_{0,1}^1}\,\omega_2^0(t)-\frac{i}{4}\sqrt{\frac{3}{\pi}}\,\omega_{0,1}^0\,\omega_2^{-1}(t)-\frac{i}{2}\sqrt{\frac{3}{2\pi}}\,\omega_{0,1}^1\,\omega_2^{-2}(t), \\
  \eta_2^{-2}(t) &= -\frac{i}{2}\sqrt{\frac{3}{2\pi}}\,\overline{\omega_{0,1}^1}\,\omega_2^{-1}(t)-\frac{i}{2}\sqrt{\frac{3}{\pi}}\,\omega_{0,1}^0\,\omega_2^{-2}(t).
\end{align*}
Hence we can write
\begin{align} \label{E:XgO2_Y2m}
  \frac{2}{3}(X\cdot\nabla\omega_{=2}(t),Y_2^m)_{L^2(S^2)} = \frac{2}{3}\eta_2^m(t) = \sum_{k=-2}^2iA_{m,k}\omega_2^k(t),
\end{align}
where we set $\alpha=\omega_{0,1}^1/2\sqrt{6\pi}\in\mathbb{C}$ and $b=\omega_{0,1}^0/2\sqrt{3\pi}\in\mathbb{R}$ as in \eqref{E:Def_Alb} and define
\begin{align} \label{E:Def_Amk}
  \bm{A} =
  \begin{pmatrix}
    A_{2,2} & \cdots & A_{2,-2} \\
    \vdots & \ddots & \vdots \\
    A_{-2,2} & \cdots & A_{-2,-2}
  \end{pmatrix}
  =
  \begin{pmatrix}
    2b & -2\alpha & 0 & 0 & 0 \\
    -2\bar{\alpha} & b & -\sqrt{6}\,\alpha & 0 & 0 \\
    0 & -\sqrt{6}\,\bar{\alpha} & 0 & -\sqrt{6}\,\alpha & 0 \\
    0 & 0 & -\sqrt{6}\,\bar{\alpha} & -b & -2\alpha \\
    0 & 0 & 0 & -2\bar{\alpha} & -2b
  \end{pmatrix}.
\end{align}
Therefore, we deduce from \eqref{E:O2m_Inner}--\eqref{E:XgO2_Y2m} and $\omega_{=2}(0)=\omega_{0,=2}$ that
\begin{align*}
  \left\{
  \begin{aligned}
    \frac{d}{dt}\omega_2^m(t) &= -4\nu\omega_2^m(t)-\sum_{k=-2}^2\{iA_{m,k}+M_{m,k}(t)\}\omega_2^k(t) \\
    &\qquad +\frac{ia}{2\sqrt{\pi}}\,\omega_{0,1}^1\delta_{1,m}-\frac{ia}{2\sqrt{\pi}}\,\omega_{0,1}^{-1}\delta_{-1,m}+(f_{\geq3}(t),Y_2^m)_{L^2(S^2)}, \quad t>0, \\
    \omega_2^m(0) &= \omega_{0,2}^m
  \end{aligned}
  \right.
\end{align*}
for $|m|=0,1,2$.
We write this system as
\begin{align} \label{E:Om2_ODE}
  \left\{
  \begin{aligned}
    \frac{d}{dt}\bm{\omega}(t) &= -\{4\nu\bm{I}_5+i\bm{A}+\bm{M}(t)\}\bm{\omega}(t)+\bm{f}_{\geq3}(t)+\bm{c}, \quad t>0, \\
    \bm{\omega}(0) &= \bm{\omega}_0,
  \end{aligned}
  \right.
\end{align}
where $\bm{I}_5$ is the $5\times 5$ identity matrix and
\begin{align} \label{E:ODE_Nota}
  \begin{aligned}
    \bm{\omega}(t) &=
    \begin{pmatrix}
      \omega_2^2(t) \\
      \vdots \\
      \omega_2^{-2}(t)
    \end{pmatrix}, \quad
    \bm{\omega}_0 =
    \begin{pmatrix}
      \omega_{0,2}^2 \\
      \vdots \\
      \omega_{0,2}^{-2}
    \end{pmatrix}, \quad
    \bm{M}(t) =
    \begin{pmatrix}
      M_{2,2}(t) & \cdots & M_{2,-2}(t) \\
      \vdots & \ddots & \vdots \\
      M_{-2,2}(t) & \cdots & M_{-2,-2}(t)
    \end{pmatrix}, \\
    \bm{f}_{\geq3}(t) &=
    \begin{pmatrix}
      (f_{\geq3}(t),Y_2^2)_{L^2(S^2)} \\
      \vdots \\
      (f_{\geq3}(t),Y_2^{-2})_{L^2(S^2)}
    \end{pmatrix}, \quad
    \bm{c} = \frac{ia}{2\sqrt{\pi}}
    \begin{pmatrix}
      0 \\
      \omega_{0,1}^1 \\
      0 \\
      -\omega_{0,1}^{-1} \\
      0
    \end{pmatrix}
    = \sqrt{6}\,ia
    \begin{pmatrix}
      0 \\
      \alpha \\
      0 \\
      \bar{\alpha} \\
      0
    \end{pmatrix}.
  \end{aligned}
\end{align}
In the last equality we also used \eqref{E:Om_Conj} and $\alpha=\omega_{0,1}^1/2\sqrt{6\pi}$.

\textbf{Step 5}: asymptotic behavior of $\omega_{=2}(t)$.
Since the matrix $\bm{A}$ given by \eqref{E:Def_Amk} is self-adjoint, all eigenvalues of $\bm{A}$ are real.
Then all eigenvalues of $4\nu\bm{I}_5+i\bm{A}$ have the real part $4\nu\neq0$ and thus they are nonzero.
Hence $4\nu\bm{I}_5+i\bm{A}$ is invertible and we can define
\begin{align} \label{E:Om2V_Lim}
  \bm{\omega}_\infty = (\omega_{2,\infty}^2,\dots,\omega_{2,\infty}^{-2})^T = (4\nu\bm{I}_5+i\bm{A})^{-1}\bm{c}.
\end{align}
Let $\bm{\xi}(t)=\bm{\omega}(t)-\bm{\omega}_\infty$.
Then we see by \eqref{E:Om2_ODE} and \eqref{E:Om2V_Lim} that
\begin{align} \label{E:Xi_ODE}
  \left\{
  \begin{aligned}
    \frac{d}{dt}\bm{\xi}(t) &= -\{4\nu\bm{I}_5+i\bm{A}+\bm{M}(t)\}\bm{\xi}(t)+\bm{f}_{\geq3}(t)-\bm{M}(t)\bm{\omega}_\infty, \quad t>0, \\
    \bm{\xi}(0) &= \bm{\omega}_0-\bm{\omega}_\infty.
  \end{aligned}
  \right.
\end{align}
Let us estimate $\bm{\xi}(t)$.
In what follows, we write $C$ for a general positive constant independent of $t$, $\nu$, $a$, and the initial data $\omega_0\in L_0^2(S^2)$ for \eqref{E:NL_Pert}.
Also, we denote by $\langle\!\langle\cdot,\cdot\rangle\!\rangle$ and $\|\cdot\|$ the inner product and norm of $\mathbb{C}^5$, and use the same notation $\|\cdot\|$ for the Frobenius norm of a matrix.
First we estimate $\bm{M}(t)$ and $\bm{f}_{\geq3}(t)$ given by \eqref{E:U2_Og3_Y2m}, \eqref{E:fg3_Y2m}, and \eqref{E:ODE_Nota}.
Since $\bm{n}_{S^2}$ and $Y_2^m$ are smooth on $S^2$ and $a_3^m$ is a constant given by \eqref{E:Y_Rec}, we have
\begin{align*}
  \|\bm{M}(t)\| &\leq C\|(I+6\Delta^{-1})\omega_{\geq3}(t)\|_{L^2(S^2)}, \\
  \|\bm{f}_{\geq3}(t)\| &\leq C\Bigl(|a||\omega_3^m(t)|+\|\omega_{\geq3}(t)\|_{L^2(S^2)}\|\bm{u}_{\geq3}(t)\|_{L^2(S^2)}\Bigr)
\end{align*}
by H\"{o}lder's inequality.
Moreover, noting that
\begin{align*}
  \omega_{\geq3}(t) = \sum_{n\geq3}\sum_{m=-n}^n\omega_n^m(t)Y_n^2, \quad \mathbf{u}_{\geq3}(t) = \mathbf{n}_{S^2}\times\nabla\Delta^{-1}\omega_{\geq3}(t),
\end{align*}
we use \eqref{E:Def_Laps}, \eqref{E:Lap_Half}, and $\lambda_n\geq12$ for $n\geq3$, and then apply \eqref{E:NP_1and3} to get
\begin{align} \label{E:Mfg3_Est}
  \begin{aligned}
    \|\bm{M}(t)\| &\leq C\|\omega_{\geq3}(t)\|_{L^2(S^2)} \leq Ce^{-10\nu t}\|\omega_{0,\geq3}\|_{L^2(S^2)}, \\
    \|\bm{f}_{\geq3}(t)\| &\leq C\Bigl(|a|\|\omega_{\geq3}(t)\|_{L^2(S^2)}+\|\omega_{\geq3}(t)\|_{L^2(S^2)}^2\Bigr) \\
    &\leq Ce^{-10\nu t}\Bigl(|a|+\|\omega_{0,\geq3}\|_{L^2(S^2)}\Bigr)\|\omega_{0,\geq3}\|_{L^2(S^2)}.
  \end{aligned}
\end{align}
In the last inequality we also used $e^{-10\nu t}\leq1$.
Now we take the real part of the inner product of \eqref{E:Xi_ODE} with $\bm{\xi}(t)$.
Then, noting that $\langle\!\langle\bm{A}\bm{\xi}(t),\bm{\xi}(t)\rangle\!\rangle$ is real since $\bm{A}$ is self-adjoint, we observe by Cauchy--Schwarz's and Young's inequalities that
\begin{align*}
  \frac{1}{2}\frac{d}{dt}\|\bm{\xi}(t)\|^2 &= -4\nu\|\bm{\xi}(t)\|^2-\mathrm{Re}\langle\!\langle\bm{M}(t)\bm{\xi}(t),\bm{\xi}(t)\rangle\!\rangle \\
  &\qquad +\mathrm{Re}\langle\!\langle\bm{f}_{\geq3}(t),\bm{\xi}(t)\rangle\!\rangle-\mathrm{Re}\langle\!\langle\bm{M}(t)\bm{\omega}_\infty,\bm{\xi}(t)\rangle\!\rangle \\
  &\leq -2\nu\|\bm{\xi}(t)\|^2+\|\bm{M}(t)\|\|\bm{\xi}(t)\|^2+\frac{1}{4\nu}\Bigl(\|\bm{f}_{\geq3}(t)\|^2+\|\bm{M}(t)\|^2\|\bm{\omega}_\infty\|^2\Bigr).
\end{align*}
Applying \eqref{E:Mfg3_Est} to this inequality, we get
\begin{align*}
  \frac{d}{dt}\|\bm{\xi}(t)\|^2 \leq \Bigl(-4\nu+Ce^{-10\nu t}\|\omega_{0,\geq3}\|_{L^2(S^2)}\Bigr)\|\bm{\xi}(t)\|^2+\frac{C\sigma}{\nu}e^{-20\nu t},
\end{align*}
where
\begin{align} \label{E:Sigma_V}
  \sigma = \Bigl(|a|^2+\|\omega_{0,\geq3}\|_{L^2(S^2)}^2+\|\bm{\omega}_\infty\|^2\Bigr)\|\omega_{0,\geq3}\|_{L^2(S^2)}^2.
\end{align}
Hence, setting
\begin{align*}
  F(t) = \int_0^t\Bigl(-4\nu+Ce^{-10\nu \tau}\|\omega_{0,\geq3}\|_{L^2(S^2)}\Bigr)\,d\tau = -4\nu t+\frac{C}{10\nu}(1-e^{-10\nu t})\|\omega_{0,\geq3}\|_{L^2(S^2)},
\end{align*}
we observe by the above inequality and $F(t)\geq-4\nu t$ that
\begin{align*}
  \frac{d}{dt}\Bigl(e^{-F(t)}\|\bm{\xi}(t)\|^2\Bigr) \leq \frac{C\sigma}{\nu}e^{-20\nu t-F(t)} \leq \frac{C\sigma}{\nu}e^{-16\nu t}
\end{align*}
and thus
\begin{align*}
  e^{-F(t)}\|\bm{\xi}(t)\|^2 \leq \|\bm{\xi}(0)\|^2+\frac{C\sigma}{16\nu^2}(1-e^{-16\nu t}) \leq \|\bm{\xi}(0)\|^2+\frac{C\sigma}{\nu^2}.
\end{align*}
Therefore, noting that
\begin{align*}
  \bm{\xi}(t) = \bm{\omega}(t)-\bm{\omega}_\infty, \quad \bm{\xi}(0) = \bm{\omega}_0-\bm{\omega}_\infty, \quad F(t) \leq -4\nu t+\frac{C}{\nu}\|\omega_{0,\geq3}\|_{L^2(S^2)},
\end{align*}
we obtain
\begin{align} \label{E:Om2V_Asymp}
  \|\bm{\omega}(t)-\bm{\omega}_\infty\|^2 \leq \left(\|\bm{\omega}_0-\bm{\omega}_\infty\|^2+\frac{C\sigma}{\nu^2}\right)\exp\left(-4\nu t+\frac{C}{\nu}\|\omega_{0,\geq3}\|_{L^2(S^2)}\right).
\end{align}
Now we define $\omega_\infty=\sum_{n=-2}^2\omega_{2,\infty}^mY_2^m\in L_0^2(S^2)$ with $\omega_{2,\infty}^m$ given by \eqref{E:Om2V_Lim}.
Then, noting that $\bm{\omega}(t)$ and $\bm{\omega}_0$ are given by \eqref{E:ODE_Nota} and
\begin{align*}
  \|\bm{w}\| = \|w\|_{L^2(S^2)}, \quad \bm{w} = (w_2^2,\dots,w_2^{-2})^T \in \mathbb{C}^5, \quad w = \sum_{m=-2}^2w_2^mY_2^m,
\end{align*}
we take the square root of \eqref{E:Om2V_Asymp} and use \eqref{E:Sigma_V} to deduce that
\begin{align*}
  \|\omega_{=2}(t)-\omega_\infty\|_{L^2(S^2)} \leq \sigma_1e^{-2\nu t}\left(\|\omega_{0,=2}-\omega_\infty\|_{L^2(S^2)}+\frac{\sigma_2}{\nu}\right)
\end{align*}
for all $t\geq0$, where $\sigma_1$ and $\sigma_2$ are given by \eqref{E:Sigma_12} with constants $C_1,C_2>0$ independent of $t$, $\nu$, $a$, and $\omega_0$.
Hence we conclude that the estimate \eqref{E:NP_2} for $\omega_{=2}(t)$ is valid.

\textbf{Step 6:} expression of $\omega_{2,\infty}^m$, $|m|=0,1,2$.
First note that
\begin{align} \label{E:OmLi_Conj}
  \omega_{2,\infty}^{-m} = \lim_{t\to\infty}\omega_2^{-m}(t) = \lim_{t\to\infty}(-1)^m\overline{\omega_2^m(t)} = (-1)^m\overline{\omega_{2,\infty}^m}, \quad |m|=0,1,2
\end{align}
by \eqref{E:Om_Conj} and \eqref{E:Om2V_Asymp}.
Since $(4\nu\bm{I}_5+i\bm{A})\bm{\omega}_\infty=\bm{c}$ by \eqref{E:Om2V_Lim}, we have
\begin{align}
  (4\nu+2ib)\omega_{2,\infty}^2-2i\alpha\omega_{2,\infty}^1 &= 0, \label{E:OL_Eq_2} \\
  -2i\bar{\alpha}\omega_{2,\infty}^2+(4\nu+ib)\omega_{2,\infty}^1-\sqrt{6}\,i\alpha\omega_{2,\infty}^0 &= \sqrt{6}\,ia\alpha, \label{E:OL_Eq_1} \\
  -\sqrt{6}\,i\bar{\alpha}\omega_{2,\infty}^1+4\nu\omega_{2,\infty}^0-\sqrt{6}\,i\alpha\omega_{2,\infty}^{-1} &= 0 \label{E:OL_Eq_0}
\end{align}
by \eqref{E:Def_Amk} and \eqref{E:ODE_Nota}.
Let $z_k=4\nu+ikb$ for $k=1,2$.
Then
\begin{align} \label{E:OmLi_12}
  \omega_{2,\infty}^2 = \frac{2i\alpha}{z_2}\,\omega_{2,\infty}^1, \quad \omega_{2,\infty}^1 = \frac{\sqrt{6}\,i\alpha z_2}{4|\alpha|^2+z_1z_2}(\omega_{2,\infty}^0+a)
\end{align}
by \eqref{E:OL_Eq_2} and \eqref{E:OL_Eq_1}.
Noting that $\omega_{2,\infty}^{-1}=-\overline{\omega_{2,\infty}^1}$ and $\omega_{2,\infty}^0\in\mathbb{R}$ by \eqref{E:OmLi_Conj} and that $a,b\in\mathbb{R}$, we multiply \eqref{E:OL_Eq_0} by $(4|\alpha|^2+z_1z_2)(4|\alpha|^2+\overline{z_1z_2})$ and use \eqref{E:OmLi_12} to find that
\begin{multline*}
  \{6|\alpha|^2z_2(4|\alpha|^2+\overline{z_1z_2})+4\nu(4|\alpha|^2+z_1z_2)(4|\alpha|^2+\overline{z_1z_2})+6|\alpha|^2\overline{z_2}(4|\alpha|^2+z_1z_2)\}\omega_{2,\infty}^0 \\
  = -6a|\alpha|^2\{z_2(4|\alpha|^2+\overline{z_1z_2})+\overline{z_2}(4|\alpha|^2+z_1z_2)\}.
\end{multline*}
Moreover, by $z_k=4\nu+ikb$ for $k=1,2$ and direct calculations, we have
\begin{align*}
  &6|\alpha|^2z_2(4|\alpha|^2+\overline{z_1z_2})+4\nu(4|\alpha|^2+z_1z_2)(4|\alpha|^2+\overline{z_1z_2})+6|\alpha|^2\overline{z_2}(4|\alpha|^2+z_1z_2) \\
  &\qquad = 16\nu\{16|\alpha|^4+8|\alpha|^2(10\nu^2+b^2)+64\nu^4+20\nu^2b^2+b^4\} \\
  &\qquad = 16\nu(4|\alpha|^2+4\nu^2+b^2)(4|\alpha|^2+16\nu^2+b^2)
\end{align*}
and
\begin{align*}
  z_2(4|\alpha|^2+\overline{z_1z_2})+\overline{z_2}(4|\alpha|^2+z_1z_2) = 32\nu(|\alpha|^2+4\nu^2+b^2).
\end{align*}
Hence
\begin{align} \label{E:OmLi_0}
  \omega_{2,\infty}^0 = -12a\cdot\frac{|\alpha|^2(4\nu^2+|\alpha|^2+b^2)}{(4\nu^2+4|\alpha|^2+b^2)(16\nu^2+4|\alpha|^2+b^2)}
\end{align}
and we conclude by \eqref{E:OmLi_Conj}, \eqref{E:OmLi_12}, \eqref{E:OmLi_0}, and $z_k=4\nu+ikb$ for $k=1,2$ that $\omega_\infty=\sum_{m=-2}^2\omega_{2,\infty}^mY_2^m$ is given by \eqref{E:Def_Alb}--\eqref{E:Def_OmLim}.
The proof of Theorem \ref{T:NLS_Main} is complete.

\section{Proof of Lemma \ref{L:Y2m_Kil}} \label{S:PoYK}
Let us give the proof of \eqref{E:Y2m_Kil} in Lemma \ref{L:Y2m_Kil}.
First we recall that
\begin{align} \label{E:Y2m_Expl}
  \begin{alignedat}{2}
    Y_2^0(\theta) &= C_0(3\cos^2\theta-1), &\quad C_0 &= \frac{1}{4}\sqrt{\frac{5}{\pi}}, \\
    Y_2^{\pm1}(\theta,\varphi) &= \mp C_1\sin\theta\cos\theta\,e^{\pm i\varphi}, &\quad C_1 &= \frac{1}{2}\sqrt{\frac{15}{2\pi}}, \\
    Y_2^{\pm2}(\theta,\varphi) &= C_2\sin^2\theta\,e^{\pm2i\varphi}, &\quad C_2 &= \frac{1}{4}\sqrt{\frac{15}{2\pi}}.
  \end{alignedat}
\end{align}
We use the spherical coordinates $x(\theta,\varphi)$ of $S^2$ given by \eqref{E:Sphrical} so that
\begin{align} \label{E:Tan_Basis}
  \partial_\theta x(\theta,\varphi) =
  \begin{pmatrix}
    \cos\theta\cos\varphi \\ \cos\theta\sin\varphi \\ -\sin\theta
  \end{pmatrix}, \quad
  \partial_\varphi x(\theta,\varphi) =
  \begin{pmatrix}
    -\sin\theta\sin\varphi \\ \sin\theta\cos\varphi \\ 0
  \end{pmatrix}.
\end{align}
In what follows, we suppress the arguments $\theta$ and $\varphi$.
Then
\begin{align*}
  \mathbf{e}_1\times x =
  \begin{pmatrix}
    0 \\ -\cos\theta \\ \sin\theta\sin\varphi
  \end{pmatrix}, \quad
  \mathbf{e}_2\times x =
  \begin{pmatrix}
    \cos\theta \\ 0 \\ -\sin\theta\cos\varphi
  \end{pmatrix}, \quad
  \mathbf{e}_3\times x =
  \begin{pmatrix}
    -\sin\theta\sin\varphi \\ \sin\theta\cos\varphi \\ 0
  \end{pmatrix}
\end{align*}
for the standard basis $\{\mathbf{e}_1,\mathbf{e}_2,\mathbf{e}_3\}$ of $\mathbb{R}^3$.
Hence
\begin{align} \label{E:Inn_KB}
  \begin{alignedat}{2}
    (\mathbf{e}_1\times x)\cdot\partial_\theta x &= -\sin\varphi, &\quad (\mathbf{e}_1\times x)\cdot\partial_\varphi x &= -\sin\theta\cos\theta\cos\varphi, \\
    (\mathbf{e}_2\times x)\cdot\partial_\theta x &= \cos\varphi, &\quad (\mathbf{e}_2\times x)\cdot\partial_\varphi x &= -\sin\theta\cos\theta\sin\varphi, \\
    (\mathbf{e}_3\times x)\cdot\partial_\theta x &= 0, &\quad (\mathbf{e}_3\times x)\cdot\partial_\varphi x &= \sin^2\theta.
  \end{alignedat}
\end{align}
Let us compute $(\mathbf{e}_k\times x)\cdot\nabla Y_2^0$ wth $k=1,2,3$.
By \eqref{E:Grad}, we have
\begin{align*}
  \nabla Y_2^0 = -6C_0\sin\theta\cos\theta\,\partial_\theta x.
\end{align*}
Hence it follows from \eqref{E:Y2m_Expl} and \eqref{E:Inn_KB} that
\begin{align*}
  (\mathbf{e}_1\times x)\cdot\nabla Y_2^0 &= 6C_0\sin\theta\cos\theta\sin\varphi = 6C_0\sin\theta\cos\theta\cdot\frac{e^{i\varphi}-e^{-i\varphi}}{2i} \\
  &= \frac{3C_0}{iC_1}(-Y_2^1-Y_2^{-1}) = \frac{\sqrt{6}\,i}{2}(Y_2^1+Y_2^{-1}).
\end{align*}
Similarly, using \eqref{E:Y2m_Expl}, \eqref{E:Inn_KB}, and $\cos\varphi=(e^{i\varphi}+e^{-i\varphi})/2$, we obtain
\begin{align*}
  (\mathbf{e}_2\times x)\cdot\nabla Y_2^0 = \frac{\sqrt{6}}{2}(Y_2^1-Y_2^{-1}), \quad (\mathbf{e}_3\times x)\cdot\nabla Y_2^0 = 0.
\end{align*}
Next we consider $(\mathbf{e}_k\times x)\cdot\nabla Y_2^1$, $k=1,2,3$.
Since
\begin{align*}
  \nabla Y_2^1 = -C_1e^{i\varphi}\left\{(\cos^2\theta-\sin^2\theta)\partial_\theta x+\frac{i\cos\theta}{\sin\theta}\partial_\varphi x\right\}
\end{align*}
by \eqref{E:Grad}, we see by \eqref{E:Inn_KB} that (see also our notation \eqref{E:Abu_Pr})
\begin{align*}
  (\mathbf{e}_1\times x)\cdot\nabla Y_2^1 = C_1e^{i\varphi}\{(\cos^2\theta-\sin^2\theta)\sin\varphi+i\cos^2\theta\cos\varphi\}.
\end{align*}
Moreover, we use
\begin{align*}
  e^{i\varphi}\sin\varphi = -\frac{i}{2}(e^{2i\varphi}-1), \quad e^{i\varphi}\cos\varphi=\frac{1}{2}(e^{2i\varphi}+1), \quad \sin^2\theta = 1-\cos^2\theta
\end{align*}
and then apply \eqref{E:Y2m_Expl} to find that
\begin{align*}
  (\mathbf{e}_1\times x)\cdot\nabla Y_2^1 &= \frac{iC_1}{2}(\sin^2\theta\,e^{2i\varphi}+3\cos^2\theta-1) \\
  &= \frac{i}{2}\left(\frac{C_1}{C_2}Y_2^2+\frac{C_1}{C_0}Y_2^0\right) = i\left(Y_2^2+\frac{\sqrt{6}}{2}Y_2^0\right).
\end{align*}
In the same way, we obtain
\begin{align*}
  (\mathbf{e}_2\times x)\cdot\nabla Y_2^1 = Y_2^2-\frac{\sqrt{6}}{2}Y_2^0, \quad (\mathbf{e}_3\times x)\cdot\nabla Y_2^1 = iY_2^1.
\end{align*}
Now let us calculate $(\mathbf{e}_k\times x)\cdot\nabla Y_2^2$, $k=1,2,3$.
By \eqref{E:Grad}, we have
\begin{align*}
  \nabla Y_2^2 = 2C_2e^{2i\varphi}(\sin\theta\cos\theta\,\partial_\theta x+i\partial_\varphi x).
\end{align*}
Hence we observe by \eqref{E:Inn_KB} that
\begin{align*}
  (\mathbf{e}_1\times x)\cdot\nabla Y_2^2 = -2C_2e^{2i\varphi}\sin\theta\cos\theta(\sin\varphi+i\cos\varphi).
\end{align*}
Moreover, since $\sin\varphi+i\cos\varphi=i(\cos\theta-i\sin\theta)=ie^{-i\varphi}$, we get
\begin{align*}
  (\mathbf{e}_1\times x)\cdot\nabla Y_2^2 = -2iC_2\sin\theta\cos\theta\,e^{i\varphi} = 2i\frac{C_2}{C_1}Y_2^1 = iY_2^1
\end{align*}
by \eqref{E:Y2m_Expl}.
Calculating similarly, we find that
\begin{align*}
  (\mathbf{e}_2\times x)\cdot\nabla Y_2^2 = -Y_2^1, \quad (\mathbf{e}_3\times x)\cdot\nabla Y_2^2 = 2iY_2^2.
\end{align*}
Applying the above results to $\mathbf{a}=(a_1,a_2,a_3)^T=\sum_{k=1}^3a_k\mathbf{e}_k$, we obtain \eqref{E:Y2m_Kil}.

\section{Appendix A: One-jet case} \label{S:Onejet}
We briefly explain the behavior of a perturbation for the one-jet Kolmogorov type flow
\begin{align*}
  \omega_1^a(\theta,\varphi) = aY_1^0(\theta) = \frac{a}{2}\sqrt{\frac{3}{\pi}}\cos\theta.
\end{align*}
The nonlinear stability problem for the vorticity equation \eqref{E:Vort} around the one-jet Kolmogorov type flow is (see \cite{Miu21pre} for the derivation of the perturbation operator)
\begin{align} \label{E:NL_Onejet}
  \left\{
  \begin{aligned}
    \partial_t\omega &= \nu(\Delta\omega+2\omega)-\frac{a}{4}\sqrt{\frac{3}{\pi}}\,\partial_\varphi(I+2\Delta^{-1})\omega-\mathbf{u}\cdot\nabla\omega \quad\text{on}\quad S^2\times(0,\infty), \\
    \mathbf{u} &= \mathbf{n}_{S^2}\times\nabla\Delta^{-1}\omega \quad\text{on}\quad S^2\times(0,\infty), \\
    \omega|_{t=0} &= \omega_0 \quad\text{on}\quad S^2.
  \end{aligned}
  \right.
\end{align}
For a real-valued $\omega_0\in L_0^2(S^2)$, we can prove the global existence and uniqueness of a weak solution $\omega$ to \eqref{E:NL_Onejet} in the class \eqref{E:WS_Class} by the Galerkin and energy methods.
Also, taking the inner product of \eqref{E:NL_Onejet} with $Y_1^m$, $|m|=0,1$ and using $-\Delta Y_1^m=2Y_1^m$ and \eqref{E:S2Int_Y1m}, we can show $\omega_{=1}(t)=\omega_{0,=1}$ for all $t\geq0$ as in Step 1 of the proof of Theorem \ref{T:NLS_Main}.
Moreover, we take the inner product of \eqref{E:NL_Onejet} with $(I+2\Delta^{-1})\omega(t)$.
Then
\begin{align*}
  \langle\partial_t\omega(t),(I+2\Delta^{-1})\omega(t)\rangle_{H_0^1} &\geq \frac{1}{3}\frac{d}{dt}\|\omega_{\geq2}(t)\|_{L^2(S^2)}^2, \\
  \bigl(\Delta\omega(t)+2\omega(t),(I+2\Delta^{-1})\omega(t)\bigr)_{L^2(S^2)} &\leq -\frac{8}{3}\|\omega_{\geq2}(t)\|_{L^2(S^2)}^2
\end{align*}
by $\lambda_1=2$ and $\lambda_n\geq6$ for $n\geq2$.
Also, noting that $(I+2\Delta^{-1})\omega(t)$ is real-valued, we have
\begin{align*}
  \bigl(\partial_\varphi(I+2\Delta^{-1})\omega(t),(I+2\Delta^{-1})\omega(t)\bigr)_{L^2(S^2)} = 0
\end{align*}
by integration by parts and, since $\mathbf{u}(t)\cdot\nabla\Delta^{-1}\omega(t)=0$ and $\mathrm{div}\,\mathbf{u}(t)=0$,
\begin{align*}
  \bigl(\mathbf{u}(t)\cdot\nabla\omega(t),(I+2\Delta^{-1})\omega(t)\bigr)_{L^2(S^2)} = \bigl(\mathbf{u}\cdot\nabla(I+2\Delta^{-1})\omega(t),(I+2\Delta^{-1})\omega(t)\bigr)_{L^2(S^2)} = 0.
\end{align*}
Therefore, we obtain
\begin{align*}
  \frac{1}{3}\frac{d}{dt}\|\omega_{\geq2}(t)\|_{L^2(S^2)}^2 \leq -\frac{8}{3}\|\omega_{\geq2}(t)\|_{L^2(S^2)}^2,
\end{align*}
which implies that
\begin{align*}
  \|\omega_{\geq2}(t)\|_{L^2(S^2)}^2 \leq e^{-8\nu t}\|\omega_{\geq2}(0)\|_{L^2(S^2)}^2 = e^{-8\nu t}\|\omega_{0,\geq2}\|_{L^2(S^2)}^2,
\end{align*}
i.e. $\|\omega_{\geq2}(t)\|_{L^2(S^2)}\leq e^{-4\nu t}\|\omega_{0,\geq2}\|_{L^2(S^2)}$ for all $t\geq0$.

\section{Appendix B: Transformation into an equation with Coriolis force} \label{S:Rotate}
In this section we see that solutions $\omega$ to \eqref{E:NL_Pert} and $\zeta$ to \eqref{E:NLP_Rot} are related by \eqref{E:Sol_CoV}.

We use the spherical coordinates $x(\theta,\varphi)$ of $S^2$ given by \eqref{E:Sphrical} and write $u(\theta,\varphi)$ instead of $(u\circ x)(\theta,\varphi)$ for a function $u$ on $S^2$.
Hence $\partial_\theta x$ and $\partial_\varphi x$ are of the form \eqref{E:Tan_Basis} and
\begin{align} \label{E:GDN_Sph}
  \begin{aligned}
    \nabla u(\theta,\varphi) &= \partial_\theta u(\theta,\varphi)\partial_\theta x(\theta,\varphi)+\frac{\partial_\varphi u(\theta,\varphi)}{\sin^2\theta}\,\partial_\varphi x(\theta,\varphi), \\
    \Delta u(\theta,\varphi) &= \frac{1}{\sin\theta}\partial_\theta\bigl(\sin\theta\,\partial_\theta u(\theta,\varphi)\bigr)+\frac{1}{\sin^2\theta}\,\partial_\varphi^2u(\theta,\varphi), \\
    \mathbf{n}_{S^2}(\theta,\varphi) &= (\sin\theta\cos\varphi, \sin\theta\sin\varphi, \cos\theta)^T.
  \end{aligned}
\end{align}
Also, we easily find that
\begin{align} \label{E:N_times_Th}
  \mathbf{n}_{S^2}(\theta,\varphi)\times\partial_\theta x(\theta,\varphi) = \frac{1}{\sin\theta}\,\partial_\varphi x(\theta,\varphi).
\end{align}
For $\Phi\in\mathbb{R}$ let $R(\Phi)$ be the rotation matrix around the $x_3$-axis by the angle $\Phi$, i.e.
\begin{align*}
  R(\Phi) =
  \begin{pmatrix}
    \cos\Phi & -\sin\Phi & 0 \\
    \sin\Phi & \cos\Phi & 0 \\
    0 & 0 & 1
  \end{pmatrix}.
\end{align*}
Then, for $\mathbf{f}=\partial_\theta x,\partial_\varphi x,\mathbf{n}_{S^2}$ and $\mathbf{a},\mathbf{b}\in\mathbb{R}^3$, we have
\begin{align} \label{E:Vec_Rot}
  \begin{aligned}
    R(\Phi)\mathbf{f}(\theta,\varphi) &= \mathbf{f}(\theta,\varphi+\Phi), \\
    \bigl(R(\Phi)\mathbf{a}\bigr)\cdot\bigl(R(\Phi)\mathbf{b}\bigr) &= \mathbf{a}\cdot\mathbf{b}, \\
    \bigl(R(\Phi)\mathbf{a}\bigr)\times\bigl(R(\Phi)\mathbf{b}\bigr) &= R(\Phi)(\mathbf{a}\times\mathbf{b}).
  \end{aligned}
\end{align}
Let $u$ and $v$ be functions in $L_0^2(S^2)$ such that $u(\theta,\varphi)=v(\theta,\varphi-\Phi)$.
Then
\begin{align} \label{E:Gr_CoV}
  \begin{aligned}
    \nabla u(\theta,\varphi) &= \partial_\theta u(\theta,\varphi)\partial_\theta x(\theta,\varphi)+\frac{\partial_\varphi u(\theta,\varphi)}{\sin^2\theta}\,\partial_\varphi x(\theta,\varphi) \\
    &= \partial_\theta v(\theta,\varphi-\Phi)R(\Phi)\partial_\theta x(\theta,\varphi-\Phi)+\frac{\partial_\varphi v(\theta,\varphi-\Phi)}{\sin^2\theta}\,R(\Phi)\partial_\varphi x(\theta,\varphi-\Phi) \\
    &= R(\Phi)\nabla v(\theta,\varphi-\Phi)
  \end{aligned}
\end{align}
by \eqref{E:GDN_Sph} and \eqref{E:Vec_Rot}.
Moreover, we see by \eqref{E:GDN_Sph} that
\begin{align} \label{E:Lap_CoV}
  \Delta u(\theta,\varphi)=\Delta v(\theta,\varphi-\Phi), \quad \Delta^{-1}u(\theta,\varphi)=\Delta^{-1}v(\theta,\varphi-\Phi).
\end{align}
Now let $\omega$ and $\zeta$ be functions on $S^2\times(0,\infty)$ related by \eqref{E:Sol_CoV}, i.e.
\begin{align*}
  \omega(\theta,\varphi,t) = \zeta(\theta,\varphi-\Omega t,t)+2\Omega\cos\theta.
\end{align*}
Then we observe by \eqref{E:Lap_CoV} and $\Delta\cos\theta=-2\cos\theta$ that
\begin{align} \label{E:CoV_01}
  \begin{aligned}
    \partial_t\omega(\theta,\varphi,t) &= \partial_t\zeta(\theta,\varphi-\Omega t,t)-\Omega\,\partial_\varphi\zeta(\theta,\varphi-\Omega t,t), \\
    (\Delta\omega+2\omega)(\theta,\varphi,t) &= (\Delta\zeta+2\zeta)(\theta,\varphi-\Omega t,t), \\
    \partial_\varphi(I+6\Delta^{-1})\omega(\theta,\varphi,t) &= \partial_\varphi(I+6\Delta^{-1})\zeta(\theta,\varphi-\Omega t,t).
  \end{aligned}
\end{align}
Moreover, we deduce from \eqref{E:Vec_Rot}--\eqref{E:Lap_CoV} and $\Delta^{-1}\cos\theta=-2^{-1}\cos\theta$ that
\begin{align*}
  \nabla\omega(\theta,\varphi,t) &= R(\Omega t)\nabla\zeta(\theta,\varphi-\Omega t,t)-2\Omega\sin\theta\,\partial_\theta x(\theta,\varphi) \\
  &= R(\Omega t)\{\nabla\zeta(\theta,\varphi-\Omega t,t)-2\Omega\sin\theta\,\partial_\theta x(\theta,\varphi-\Omega t)\}, \\
  \nabla\Delta^{-1}\omega(\theta,\varphi,t) &= R(\Omega t)\{\nabla\Delta^{-1}\zeta(\theta,\varphi-\Omega t,t)+\Omega\sin\theta\,\partial_\theta x(\theta,\varphi-\Omega t)\}.
\end{align*}
Thus, setting $\mathbf{u}=\mathbf{n}_{S^2}\times\nabla\Delta^{-1}\omega$ and $\mathbf{v}=\mathbf{n}_{S^2}\times\nabla\Delta^{-1}\zeta$, we have
\begin{align*}
  \mathbf{u}(\theta,\varphi,t) &= \mathbf{n}_{S^2}(\theta,\varphi)\times\nabla\Delta^{-1}\omega(\theta,\varphi,t) \\
  &= \mathbf{n}_{S^2}(\theta,\varphi)\times\bigl(R(\Omega t)\{\nabla\Delta^{-1}\zeta(\theta,\varphi-\Omega t,t)+\Omega\sin\theta\,\partial_\theta x(\theta,\varphi-\Omega t)\}\bigr) \\
  &= R(\Omega t)\{\mathbf{v}(\theta,\varphi-\Omega t,t)+\Omega\,\partial_\varphi x(\theta,\varphi-\Omega t)\}
\end{align*}
by using \eqref{E:Vec_Rot} and then applying \eqref{E:N_times_Th}.
Hence
\begin{align*}
  (\mathbf{u}\cdot\nabla\omega)(\theta,\varphi,t) &= [(\mathbf{v}+\Omega\,\partial_\varphi x)\cdot(\nabla\zeta-2\Omega\sin\theta\,\partial_\theta x)](\theta,\varphi-\Omega t,t)
\end{align*}
by \eqref{E:Vec_Rot}.
Moreover, since $\partial_\varphi x\cdot\partial_\theta x=0$, $\partial_\varphi x\cdot\nabla\zeta=\partial_\varphi\zeta$, and
\begin{align*}
  \mathbf{v}\cdot\partial_\theta x &= (\mathbf{n}_{S^2}\times\nabla\Delta^{-1}\zeta)\cdot\partial_\theta x = -\nabla\Delta^{-1}\zeta\cdot(\mathbf{n}_{S^2}\times\partial_\theta x) \\
  &= -\frac{1}{\sin\theta}\,\nabla\Delta^{-1}\zeta\cdot\partial_\varphi x = -\frac{1}{\sin\theta}\,\partial_\varphi\Delta^{-1}\zeta
\end{align*}
by $(\mathbf{a}\times\mathbf{b})\cdot\mathbf{c}=-\mathbf{b}\cdot(\mathbf{a}\times\mathbf{c})$ for $\mathbf{a},\mathbf{b},\mathbf{c}\in\mathbb{R}^3$ and \eqref{E:N_times_Th}, we get
\begin{align} \label{E:Con_CoV}
  (\mathbf{u}\cdot\nabla\omega)(\theta,\varphi,t) = (\mathbf{v}\cdot\nabla\zeta+\Omega\,\partial_\varphi\zeta+2\Omega\,\partial_\varphi\Delta^{-1}\zeta)(\theta,\varphi-\Omega t,t).
\end{align}
Therefore, for the functions $\omega$ and $\zeta$ related by \eqref{E:Sol_CoV}, we conclude by \eqref{E:CoV_01} and \eqref{E:Con_CoV} that $\omega$ is a solution to \eqref{E:NL_Pert} if and only if $\zeta$ is a solution to \eqref{E:NLP_Rot}.

\section*{Acknowledgments}
The work of the author was supported by Grant-in-Aid for JSPS Fellows No. 19J00693.

\bibliographystyle{abbrv}
\bibliography{NSK_NLS_Ref}

\end{document}